\newcommand{\rnnnn}{\mathbb R^+}
\newcommand{\rnnn}{\mathbb R^{n+1}}
\newcommand{\rt}{\mathbb R^3}
\newcommand{\sn}{ {\mathbb{S}^{n}}}
\newcommand{\psum}{{+_{\negthinspace\kern-2pt p}}\,}
\newcommand{\qsum}[1]{{+_{\negthinspace\kern-2pt #1}}\,}
\newcommand{\dpsum}{{\tilde+_{\negthinspace\kern-1pt p}}\,}
\newcommand{\dqsum}[1]{{\tilde+_{\negthinspace\kern-1pt #1}}\,}
\newcommand{\lsub}[1]{\hskip -1.5pt\lower.5ex\hbox{$_{#1}$}}
\numberwithin{equation}{section}
\newtheorem{theo}{Theorem}[section]
\newtheorem{coro}[theo]{Corollary}
\newtheorem{lem}[theo]{Lemma}
\newtheorem{prop}[theo]{Proposition}
 \theoremstyle{definition}
\begin{document}

\title{Pinching estimates of hypersurfaces by a generalized Gauss curvature flow}

\author[J. Hu]{Jingrong Hu}
\address{School of Mathematics, Hunan University, Changsha, 410082, Hunan Province, China}
\email{hujinrong@hnu.edu.cn}
\author[P. Zhang]{Ping Zhang}
\address{School of Mathematics, Hunan University, Changsha, 410082, Hunan Province, China}
\email{zping071727@hnu.edu.cn}

\begin{abstract}
A variant of the Gauss curvature flow for closed and convex hypersurfaces is considered.  We reveal that if the initial hypersurface is pinched enough, then this property is preserved. Furthermore, based on some structure assumptions on the speed function of the shrinking flow, we show that the flow converges to a sphere. This may generalize the result of B. Chow\cite{CW85} to the possible non-homogeneous curvature flows.
\end{abstract}
\keywords{Pinching estimates, Gauss curvature flow}
\subjclass[2010]{35K55, 52A20, 58J35 }

\maketitle

\baselineskip18pt

\parskip3pt

\section{Introduction}

In this paper, we investigate a
generalized Gauss curvature flow. More precisely, given an integer $n\geq 2$,  let $F_{0}:\sn \rightarrow M_{0}\subset \rnnn(n\geq 2)$ be a parametrization of a smooth, strictly convex hypersurface $M_{0}$. The motion of the convex hypersurfaces $M_{t}$ satisfy the curvature flow equation:
\begin{equation}\label{ca}
\left\{
\begin{array}{lr}
\frac{\partial F}{\partial t}(x,t)=-f(K(x,t))v(x,t), \\
F(x,0)=\vartheta F_{0}(x), \quad x\in \sn.
\end{array}\right.
\end{equation}
Here $\vartheta>0$,  $F(\cdot, t)$ parametrizes $M_{t}$, $v(x,t)$ is the unit outer normal vector at $F(x,t)$, $K$ is the Gauss curvature of $M_{t}$, and $f:\rnnnn\rightarrow \rnnnn$ is a smooth function.

In particular, $\alpha$-Gauss curvature flow ($\alpha$-GCF in abbreviation) have been studied by many authors in the sense that $f(K)=K^{\alpha}$ with $\alpha>0$. The classical Gauss curvature flow (GCF), $\alpha=1$ case, was first introduced by Firey \cite{F74} to describe a model for the wearing of tumbling stones in $\rt$. Later, Tso \cite{T85} revealed that GCF exists up to some maximal time in the circumstance that the enclosed volume converges to zero in $\rnnn$.  In the $\alpha=1/(n+2)$, Andrews\cite{An96} obtained the convergence of the flow to an ellipsoid. For the range of $\alpha>1/(n+2)$, Chow\cite{CW85} showed that the flow converges to a round sphere for $\alpha=1/n$. Andrews-Guan-Ni\cite{Ag16} proved the convergence to a self-similar solition for $\alpha>1/(n+2)$. In \cite{BCD17}, Brendle-Choi-Daskalopoulos showed that a closed strictly convex solution of the $\alpha$-GCF converges to a round sphere in $\rnnn$ for all $\alpha > 1/(n+2)$.

Some general results for flowing surfaces by using nonhomogeneous speeds (other than powers of Gauss curvature) were obtained, for instance, Chow and Tsai \cite{CW98} studied the expansion of a smooth closed convex hypersurface by a nonhomogeneous function of the reciprocal of the Gauss curvature, proved that the hypersurface expand to infinity with its shape becoming round asymptotically starting from any smooth and strictly convex hypersurface,  and got some partial results for contraction and bidirectional flows. In \cite{CW00}, Chou-Wang concerned a sort of logarithmic version of the Gauss curvature flow for  convex hypersurfaces. To the best of our knowledge, the asymptotical behavior for flow of convex hypersurfaces by arbitrary speeds of the Gauss curvature have been relatively much less well investigated. Subjecting to this topic, we are devoted to studying the convergence of the shrinking flow showed in \eqref{ca}. For this purpose, we require that the speed function $f(K)$ satisfies the following condition:

{\bf Condition 1.1.}
\begin{itemize}

    \item [(i)] $f^{'}(K)> 0$;
    \item [(ii)] $\alpha_{1}\leq\frac{nKf^{'}(K)-f(K)}{f(K)}\leq \alpha_{2}$ for positive constants $0<\alpha_{1}< \alpha_{2}\leq n-1$;
        \item [(iii)] $0< \frac{K f^{''}(K)}{f^{'}(K)}+1-\frac{1}{n}\leq \beta $ for a positive constant $\beta$;
            \item [(iv)]  $\frac{K}{f(K)^{\gamma}}\geq \hat{\gamma}$ for sufficiently large $K$, and positive constants $\gamma, \hat{\gamma}$.
                \item [(v)] $f(K)$ is convex with respect to the radii of principal curvature of convex hypersurface $M_{t}$.
    \end{itemize}

 We notice that Condition (i) ensures that \eqref{ca} is locally a parabolic system. Short-time existence and uniqueness of solutions can be obtained via standard argument (see \cite{Ge06}).  Different from the results obtained by \cite{CW98}, our study sheds light on the pinching estimates of hypersurfaces by a contraction flow in the nonhomogeneous Gauss curvature form. We along with the similar lines as presented in Chow's work . In \cite{CW85}, Chow made use of the quantity $K/H^{n}$  and showed that convex hypersurface evolving under the $n$-th root of the Gauss-curvature converge shrinks to a sphere after suitable rescaling. He also revealed that a similar result for normal velocities whose degree of homogeneity is greater than 1. After then, the results of Chow for Gauss-curvature were extended, such as in a series of paper of Andrews \cite{An94, An07, An10} with speeds given by homogeneous of degree 1 functions of the principal curvatures.  However, shrink flows by non-homogeneous Gauss curvature flows may have received relatively little attention. Heuristically, we give the following result.
\begin{theo}\label{th*}
Let $F(\cdot,t)$ be a family of hypersurfaces evolving under the curvature flow \eqref{ca}. Under the assumption of Condition 1.1, if the initial hypersurface is pinched in the sense that
\begin{equation}\label{conv&}
\frac{K(x,t)}{H^{n}(x,t)}\geq C(n ), \quad \forall x\in \sn
\end{equation}
for a positive constant $C(n)< 1/n^{n}$, then \eqref{conv&} is preserved under the flow. Furthermore, there exist a $\vartheta^{*}>0$ such  that the family of pinched hypersurface beginning at $\vartheta^{*}F_{0}(x)$ converges to a sphere.
\end{theo}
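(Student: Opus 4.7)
The plan is to establish the pinching preservation via a parabolic maximum principle applied to the scalar $P:=K/H^n$, and then to deduce convergence to a round sphere via a rescaling argument at the maximal time of existence. For a strictly convex hypersurface the arithmetic--geometric mean inequality gives $P\le 1/n^n$, with equality exactly at umbilic points, so the hypothesis $P\ge C(n)<1/n^n$ measures how close $M_0$ is to a sphere. Preservation of this inequality amounts to showing that $\min_{M_t}P$ is non-decreasing in $t$.

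First I would derive the evolution equation of $P$. Combining the standard identities
\[
\partial_t h^i_j=\nabla^i\nabla_j f(K)+f(K)(h^2)^i_j,\qquad \partial_t g^{ij}=2f(K)h^{ij},
\]
with $K=\det(h^i_j)$ and $H=\operatorname{tr}(h^i_j)$, and differentiating $\log P=\log K-n\log H$, one obtains an equation of the schematic form
\[
\partial_t P=\mathcal L(P)+\mathcal R,
\]
where $\mathcal L$ is a linear second-order elliptic operator with leading coefficient $f'(K)K(h^{-1})^{ij}$, and $\mathcal R$ is a zero-order algebraic remainder depending on $\kappa_1,\dots,\kappa_n$, $f$, $f'$ and $f''$. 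At a spatial minimum of $P$ one has $\nabla P=0$ and $\mathcal L(P)\ge 0$, so preservation reduces to verifying $\mathcal R\ge 0$ at such a point. Here each structural assumption enters explicitly: Condition 1.1(ii) controls the piece of $\mathcal R$ proportional to $nKf'-f$; Condition 1.1(iii) controls the piece proportional to $Kf''/f'+1-1/n$; and Condition 1.1(v) supplies the concavity of $f$ in the radii of curvature needed to invoke Andrews' tensor maximum principle at points where the minimum of $P$ is attained on a higher-dimensional eigenspace. Combining these signs with the Newton--Maclaurin inequalities on $\kappa_i$ and with the pointwise bound $P\ge C(n)$ should yield $\mathcal R\ge 0$, thereby preserving the pinching.

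For the convergence statement, a Tso-type argument, adapted to the non-homogeneous speed via Condition 1.1(iv) (which controls the growth of $f(K)$ by a power of $K$), shows that the flow exists on a maximal interval $[0,T)$ with the enclosed volume tending to zero. A suitable choice of $\vartheta^*>0$ is made so that, after rescaling by the $(n+1)$-th root of the enclosed volume, the non-homogeneous scaling of $f$ is balanced against the contraction rate and the rescaled hypersurfaces $\widetilde M_t$ have a prescribed scale in time. Pinching together with convexity gives uniform $C^0$ and $C^1$ bounds on $\widetilde M_t$; Krylov--Safonov-type estimates combined with the concavity from Condition 1.1(v) upgrade these to $C^{2,\alpha}$ bounds, so a subsequence converges smoothly to a strictly convex, strictly pinched limit $M_\infty$. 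Iterating the computation of the previous paragraph, together with a strict form $\mathcal R>0$ off umbilic points, forces $P\equiv 1/n^n$ on $M_\infty$; the equality case of AM--GM then yields $\kappa_1=\dots=\kappa_n$ on $M_\infty$, so $M_\infty$ is a round sphere.

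The principal obstacle is the verification of $\mathcal R\ge 0$ at the minimum of $P$ for non-homogeneous $f$. In Chow's original setting $f(K)=K^{1/n}$ the algebraic structure is such that all awkward cross-terms cancel identically and $\mathcal R$ reduces to a manifestly non-negative sum of squares in $\kappa_i-\kappa_j$. For a general $f$ satisfying Condition 1.1 one picks up extra terms proportional to $nKf'-f$ and to $Kf''/f'+1-1/n$ that must be reorganised via symmetric-function identities and bounded uniformly in the $\kappa_i$. Extracting the correct sign of $\mathcal R$ in this reorganisation, uniformly over the admissible range of curvatures permitted by the pinching, will be the main technical difficulty; the role of the second, otherwise mysterious, choice of the initial dilation $\vartheta^*$ is precisely to keep the spectrum of $h^i_j$ inside the regime where this uniform estimate is valid.
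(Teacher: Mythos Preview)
Your outline has the right shape for the pinching step but misidentifies the crux. You write that at a spatial minimum of $P=K/H^n$ the problem reduces to checking the sign of a ``zero-order algebraic remainder $\mathcal R$ depending on $\kappa_1,\dots,\kappa_n,f,f',f''$''. This is not what happens. The evolution of $P$ contains, in addition to the zero-order term $\frac{nK}{H^{n+1}}(nKf'-f)\bigl(|A|^2-\tfrac{1}{n}H^2\bigr)$ (whose sign is indeed handled by Condition~1.1(ii)), two \emph{gradient} terms that do not vanish at a minimum of $P$: a favourable term
\[
\frac{nK^2f'(K)}{H^{n+3}}\,\bigl|H\nabla_i h_{kl}-\nabla_i H\,h_{kl}\bigr|^2_{g,h}
\]
and a potentially unfavourable term
\[
\frac{1}{H^{n}}\Bigl(Kf''+f'-\tfrac{f'}{n}\Bigr)\,|\nabla K|^2_{e},\qquad e=\Bigl(h^{-1}_{ij}-\tfrac{n}{H}g^{ij}\Bigr)^{-1}.
\]
The paper's whole point is to balance these: the pinching $P\ge C(n)$ is used (Lemma~\ref{GH}) to obtain both $h_{ij}\ge\varepsilon H g_{ij}$ \emph{and} the quantitative bound $|h^{-1}_{ij}-\tfrac{n}{H}g^{ij}|\le \tfrac{\varepsilon^2}{4n\beta H}$; the first controls the good term from below via Huisken's inequality, the second, together with Condition~1.1(iii), controls the bad term from above, and the good term wins. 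None of this is ``symmetric-function identities on $\kappa_i$''; it is a genuine gradient estimate, and your proposal does not contain it. Relatedly, Condition~1.1(v) plays no role in the pinching argument and there is no call for Andrews' tensor maximum principle here; convexity of $f$ in the principal radii is used only later, in the support-function estimate for the largest principal radius.

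Your convergence scheme is also off track for this non-homogeneous setting. Rescaling by enclosed volume does not turn the flow into itself when $f$ is not a power, so there is no clean rescaled limit to analyse, and your sentence about ``balancing the non-homogeneous scaling of $f$ against the contraction rate'' does not correspond to an actual computation. The paper proceeds differently: it first strengthens the pinching to $g_\sigma:=H^\sigma\bigl(\tfrac{1}{n^n}-P\bigr)\le\max g_\sigma(\cdot,0)$ for some small $\sigma>0$ (Theorem~\ref{conv}), then works with the support function to get uniform bounds on the principal radii (Lemma~\ref{PL*}, where Condition~1.1(v) enters) and on $K$ (Lemma~\ref{prin}, where Condition~1.1(iv) enters via a Tso-type barrier). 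The dilation $\vartheta^*$ is chosen at the threshold between finite-time collapse and long-time existence, so that for $\vartheta>\vartheta^*$ the flow exists for all time with uniform $C^2$ control; it is \emph{not} chosen to keep the curvatures in a regime where a sign computation works. On the long-time solution, $H\to\infty$ somewhere while $g_\sigma$ stays bounded, forcing $P\to 1/n^n$ and hence a spherical limit.
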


The main contribution of current paper is that we can remove the homogeneity assumption on $f$ for shrinking flows and still get results similar to the homogeneous case in \cite{CW85}. Note that analogous results showed in Theorem \ref{th*} have been obtained by a various of other flows, for instance, Huisken showed that convex hypersurfaces evolving by mean curvature contract to a point in finite time, and become spherical in shape as the limit is achieved in his seminal paper \cite{H84}. After that, the power of mean curvature flow was considered by Schulze \cite{S06}. Very recently, Espin \cite{Es22} employed a non-homogeneous variant of mean curvature flow.

The organization of this paper goes as follows. In Sect. \ref{Sec2}, we give some basics on hypersurface. In Sect. \ref{Sec3}, some necessary evolution equations are listed. In Sect. \ref{Sec4}, we show that if the initial hypersurface is pinched strongly enough, then this preserved under the flow. In the last, we reveal that the flow converges to a sphere, and give  non-homogeneous examples on $f(K)$ responding to Theorem \ref{th*}.

\section{Notations and Preliminaries}
\label{Sec2}
 In this section, we list some facts on the convex hypersurfaces that shall be used in the subsequent contents.

Let $M_{t}\subset \rnnn$ be a convex hypersurface, the second fundamental form $h_{ij}$ of $M_{t}$ is given by $-\langle \partial^{2}F/ \partial x_{i}\partial x_{j},v\rangle$, the induced metric $g_{ij}$  on $M_{t}$ is represented by $\langle \partial F/\partial x_{i}, \partial F/ \partial x_{j}\rangle$ in the ordinate system $\{x_{i}\}$, and the Gauss curvature of $M_{t}$ is given by $K=\det h_{ij} /  \det g_{ij}$.  If $\zeta$ and $\xi$ are scalars we use the notation $\langle \nabla \zeta, \nabla \xi \rangle:=g^{ab}\nabla_{a}\zeta \nabla_{b}\xi$ and $|\nabla \xi|^{2}=\langle \nabla \xi, \nabla \xi \rangle$. If $X$ and $Y$ are 1-forms and  $s$ is a symmetric positive definite covariant 2-tensor on $M_{t}$, then define
$ \langle X, Y\rangle_{s}=s^{-1}_{ij}X_{i}X_{y}$. If $t$ is a contravariant 2-tensor, define $\langle X, Y\rangle_{t^{-1}}=t^{ij}X_{i}X_{j}$. If $A_{ikl}$ is a covariant 3-tensor, then
\[
|A_{ikl}|^{2}_{g,h}=g^{ij}h^{-1}_{km}h^{-1}_{ln}A_{ikl}A_{jmn}.
\]
We are in the place to recall the following fundamental formulas of a hypersurface $\rnnn$,
\begin{equation*}
\nabla_{i}h_{kl}=\nabla_{k}h_{il}=\nabla_{l}h_{ik} \quad (Codazzi\ equation),
\end{equation*}
\begin{equation*}
R_{ijkl}=h_{ik}h_{jl}-h_{il}h_{jk}\quad (Gauss \ equation),
\end{equation*}
where $R_{ijkl}$ is the curvature tensor, and we obtain the following formulas:
\begin{equation*}\label{}
h_{ijkl}=h_{ijlk}+h_{mj}R_{imlk}+h_{im}R_{jmlk},
\end{equation*}
\begin{equation*}\label{}
h_{ijkl}=h_{klij}+(h_{mj}h_{il}-h_{ml}h_{ij})h_{mk}+(h_{mj}h_{kl}-h_{ml}h_{kj})h_{mi}.
\end{equation*}

\section{Evolution equations}
\label{Sec3}
In this section, we give the evolution equations of some quantities.

\begin{prop}\label{pa}
Let $F(\cdot,t)$ satisfy the flow equation \eqref{ca}, then we obtain the following evolution equations:
\begin{equation}\label{a1}
\frac{\partial } {\partial t}g_{ij}=-2f(K)h_{ij},
\end{equation}
\begin{equation}\label{a2}
\frac{\partial } {\partial t}g^{ij} =2f(K)g^{ik}g^{jl}h_{kl},
\end{equation}
\begin{equation}\label{a3}
\frac{\partial } {\partial t}v =f^{'}(K)\nabla K,
\end{equation}
\begin{equation}\label{a4}
\frac{\partial } {\partial t}h_{ij}=f^{''}(K)\nabla_{i}K\nabla_{j}K+f^{'}(K)\nabla_{ij}K-fh_{ik}h_{kj},
\end{equation}
where $h_{ik}h_{kj}:=h_{ik}g^{km}h_{mj}$.
\end{prop}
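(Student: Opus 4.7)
The plan is to derive all four identities from a single master formula valid for any flow of the form $\partial_t F = -\Phi v$ with scalar speed $\Phi$, then specialize to $\Phi = f(K)$ by the chain rule. The only inputs I will use are the Weingarten relation $\partial_i v = h_i^{\,k}\,\partial_k F$, the Gauss formula $\partial_i \partial_j F = \Gamma_{ij}^{k}\,\partial_k F - h_{ij} v$, and the orthogonality $\langle v, \partial_j F\rangle = 0$, $\langle v,v\rangle = 1$. No deep ingredient is needed, so the whole proposition is essentially a bookkeeping calculation; the only spot that requires care is the evolution of $h_{ij}$, where one has to commute two spatial derivatives past $\partial_t F$.

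For \eqref{a1} I would differentiate $g_{ij} = \langle \partial_i F, \partial_j F\rangle$ in $t$, exchange $\partial_t$ with $\partial_i$, and expand $\partial_i(f(K)v) = f'(K)\nabla_i K\cdot v + f(K)\,\partial_i v$. After pairing against $\partial_j F$, the normal piece drops by orthogonality, while the tangential piece $f(K)h_i^{\,k}\partial_k F$ contributes $-f(K)h_{ij}$; symmetrizing in $i,j$ gives \eqref{a1}. Formula \eqref{a2} is then immediate from $\partial_t g^{ij} = -g^{ik}g^{jl}\partial_t g_{kl}$.

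For \eqref{a3}, differentiating $\langle v,v\rangle = 1$ shows $\partial_t v$ is tangential, so $\partial_t v = a^k \partial_k F$ for some coefficients $a^k$. Differentiating the identity $\langle v,\partial_j F\rangle = 0$ in $t$ and substituting $\partial_j(\partial_t F) = -f'(K)\nabla_j K\cdot v - f(K)\,\partial_j v$ gives $a^k g_{kj} = f'(K)\nabla_j K$, i.e.\ $\partial_t v = f'(K)\nabla K$ when the right-hand side is interpreted as the gradient vector, which is \eqref{a3}.

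The main task is \eqref{a4}. The cleanest route is to first establish the general identity $\partial_t h_{ij} = \nabla_i\nabla_j \Phi - \Phi\, h_{ik}h^{k}_{\;j}$ for an arbitrary scalar speed $\Phi$. Starting from $h_{ij} = -\langle \partial_i\partial_j F, v\rangle$ and using \eqref{a3} together with the Gauss formula, the term $-\langle \partial_i\partial_j F,\partial_t v\rangle$ converts the Christoffel corrections into covariant ones, while $-\langle \partial_i\partial_j(\Phi v), v\rangle$ produces the second covariant derivative of $\Phi$ and, through the Weingarten identity $\partial_i\partial_j v = (\nabla_j h_i^{\,k})\partial_k F + h_i^{\,k}\partial_j\partial_k F$, the Simons-type term $-\Phi h_{ik}h^{k}_{\;j}$. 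Specializing $\Phi = f(K)$ and expanding $\nabla_i\nabla_j f(K) = f''(K)\nabla_i K\,\nabla_j K + f'(K)\nabla_i\nabla_j K$ by the chain rule yields \eqref{a4}. The delicate point throughout this last step is sign and symbol tracking when passing from coordinate to covariant derivatives; no tool beyond the Codazzi equation is needed, but one must be meticulous to recover the stated expression.
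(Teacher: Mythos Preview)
Your proposal is correct and follows essentially the same route as the paper: direct differentiation of $g_{ij}=\langle\partial_iF,\partial_jF\rangle$, the orthogonality relations for $v$, and the Gauss--Weingarten formulas to reduce coordinate derivatives to covariant ones. The only cosmetic difference is that for \eqref{a4} you first record the general identity $\partial_t h_{ij}=\nabla_i\nabla_j\Phi-\Phi\,h_{ik}h^{k}_{\ j}$ and then apply the chain rule to $\Phi=f(K)$, whereas the paper plugs in $f(K)$ from the start and expands $\partial_i\partial_j(f(K)v)$ directly; the computations are otherwise line-for-line equivalent (and in fact Codazzi is not needed here, only the Gauss formula and Weingarten).
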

\begin{proof}
The computation follows the similar lines as showed in \cite{H84}. Here for readers' convenience, we give the details.
For \eqref{a1},
\begin{equation}
\begin{split}
\label{b1}
\frac{\partial }{\partial t}g_{ij}&=\frac{\partial}{\partial t}\Big\langle \frac{\partial F}{\partial x_{i}},\frac{\partial F}{\partial x_{j}}\Big\rangle\\
&=\Big\langle \frac{\partial}{\partial x_{i}}(-f(K)v),\frac{\partial F}{\partial x_{j}}\Big\rangle+\Big\langle \frac{\partial F}{\partial x_{i}},\frac{\partial}{\partial x_{j}}(-f(K)v)\Big\rangle\\
&=-f(K)\Big\langle \frac{\partial }{\partial x_{i}}v,\frac{\partial F}{\partial x_{j}}\Big\rangle-f(K)\Big\langle \frac{\partial }{\partial x_{i}}F,\frac{\partial v}{\partial x_{j}}\Big\rangle\\
&=-2f(K)h_{ij}.
\end{split}
\end{equation}
Then,  \eqref{a2} immediately is obtained by using \eqref{a1}. For \eqref{a3},
\begin{equation*}
\begin{split}
\label{Up3}
\frac{\partial }{\partial t}v&=\Big\langle\frac{\partial }{\partial t}v,\frac{\partial F}{\partial x_{i}}\Big\rangle\frac{\partial F}{\partial x_{j}}g^{ij}\\
&=-\Big\langle v,\frac{\partial}{\partial t}\frac{\partial F}{\partial x_{i}}\Big\rangle\frac{\partial F}{\partial x_{j}}g^{ij}\\
&=f^{'}(K)\frac{\partial}{\partial x_{i}}K\cdot \frac{\partial F}{\partial x_{j}}g^{ij}\\
&=f^{'}(K)\nabla K.
\end{split}
\end{equation*}
For \eqref{a4},
\begin{equation*}
\begin{split}
\label{Up3}
\frac{\partial}{\partial t}h_{ij}&=-\frac{\partial }{\partial t}\Big\langle \frac{\partial^{2}F}{\partial x_{i}\partial x_{j}},v\Big\rangle\\
&=\Big\langle\frac{\partial^{2}}{\partial x_{i}\partial x_{j}}(f(K)v),v\Big\rangle-\Big\langle\frac{\partial^{2}F}{\partial x_{i}\partial x_{j}}, f^{'}(K)\frac{\partial }{\partial x_{l}}K\cdot \frac{\partial F}{\partial x_{m}}g^{lm}\Big\rangle\\
&=f^{''}(K)\frac{\partial }{\partial x_{i}}K\frac{\partial }{\partial x_{j}}K+f^{'}(K)\frac{\partial^{2}K}{\partial x_{i}\partial x_{j}}+f(K)\Big\langle \frac{\partial }{\partial x_{i}}(h_{jm}g^{ml}\frac{\partial F}{\partial x_{l}}),v\Big\rangle \\
&\quad -\Big\langle \Gamma^{k}_{ij}\frac{\partial F}{\partial x_{k}}-h_{ij}v,f^{'}(K)\frac{\partial }{\partial x_{l}}K\cdot \frac{ \partial F}{\partial x_{m}}g^{lm}\Big\rangle\\
&=f^{''}(K)\frac{\partial }{\partial x_{i}}K\frac{\partial }{\partial x_{j}}K+f^{'}(K)\frac{\partial^{2}K}{\partial x_{i}\partial x_{j}}-f^{'}(K)\Gamma^{k}_{ij}\frac{\partial }{\partial x_{k}}K+ f(K)h_{jm}g^{ml}\Big\langle \Gamma^{\sigma}_{il}\frac{\partial F}{\partial x_{\sigma}}-h_{il}v,v\Big\rangle\\
&=f^{''}(K)\nabla_{i}K\nabla_{j}K+f^{'}(K)\nabla_{ij}K-f(K)h_{ik}h_{kj}.
\end{split}
\end{equation*}

\end{proof}

\begin{lem}
Under the flow equation \eqref{ca}, we have
\begin{equation}
\begin{split}
\label{hij1}
\frac{\partial }{\partial t}h_{ij}&=f^{''}(K)\nabla_{i}K\nabla_{j}K-fh_{ik}h_{kj}\\
&\quad +f^{'}(K)\left(K\Lambda h_{ij}+HKh_{ij}-nh_{ik}h_{kj}K+\frac{\nabla_{i}K\nabla_{j}K}{K}+K\nabla_{i}h^{-1}_{kl}\nabla_{j}h_{kl}\right),
\end{split}
\end{equation}
or equivalently
\begin{equation}
\begin{split}
\label{hij2}
\frac{\partial }{\partial t}h_{ij}=&f^{'}(K)K\Lambda h_{ij}+f^{'}(K)HKh_{ij}-(f+nKf^{'}(K))h_{ik}h_{kj}+\left(f^{''}(K)+\frac{f^{'}}{K}-\frac{f^{'}}{nK}\right)\nabla_{i}K\nabla_{j}K\\
&\quad +Kf^{'}(K)\left[ -\frac{1}{H^{2}}h^{-1}_{km}h^{-1}_{lm}(H\nabla_{i}h_{mn}-\nabla_{i}H h_{mn})(H\nabla_{j}h_{kl}-\nabla_{j}Hh_{kl})+\frac{H^{2n}}{nK^{2}}\nabla_{i}\left( \frac{K}{H^{n}}\right)\nabla_{j}\left(\frac{K}{H^{n}}\right)\right],
\end{split}
\end{equation}
where $\Lambda=h^{-1}_{kl}\nabla_{k}\nabla_{l}$.
\end{lem}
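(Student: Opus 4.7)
The starting point is the evolution equation \eqref{a4} of Proposition \ref{pa}, namely $\partial_{t}h_{ij}=f''(K)\nabla_{i}K\nabla_{j}K+f'(K)\nabla_{ij}K-f(K)h_{ik}h_{kj}$. The whole task reduces to rewriting the Hessian $\nabla_{ij}K$ in a form adapted to the flow by extracting the leading second-order operator $K\Lambda h_{ij}$; this will produce \eqref{hij1}. The identity \eqref{hij2} is then an algebraic repackaging of the gradient terms in \eqref{hij1}.

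For \eqref{hij1}, Jacobi's formula applied to $K=\det h_{ij}/\det g_{ij}$ yields the well-known identity $\nabla_{i}K=K\,h^{-1}_{kl}\nabla_{i}h_{kl}$. Differentiating once more and using $\nabla_{j}h^{-1}_{kl}=-h^{-1}_{km}h^{-1}_{ln}\nabla_{j}h_{mn}$, I obtain three contributions: $\nabla_{i}K\nabla_{j}K/K$ from the product rule on $K$; the gradient contraction $K\,\nabla_{i}h^{-1}_{kl}\nabla_{j}h_{kl}$ from differentiating $h^{-1}_{kl}$; and the second-derivative piece $K\,h^{-1}_{kl}\nabla_{j}\nabla_{i}h_{kl}$. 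To convert the last piece into $K\Lambda h_{ij}$ plus lower-order corrections I apply the commutation identity $h_{ijkl}=h_{klij}+(h_{mj}h_{il}-h_{ml}h_{ij})h_{mk}+(h_{mj}h_{kl}-h_{ml}h_{kj})h_{mi}$ recalled in Sect.~\ref{Sec2}, trace with $h^{-1}_{kl}$, and simplify using $h^{-1}_{kl}h_{il}=\delta_{ki}$ and $h^{-1}_{kl}h_{kl}=n$. The net correction produced by the curvature terms is $HKh_{ij}-nKh_{ik}h_{kj}$. Substituting back into \eqref{a4} yields \eqref{hij1}.

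For \eqref{hij2} the passage from \eqref{hij1} is purely algebraic. The key identity to establish is
\begin{equation*}
-\frac{h^{-1}_{km}h^{-1}_{ln}}{H^{2}}\bigl(H\nabla_{i}h_{mn}-\nabla_{i}H\,h_{mn}\bigr)\bigl(H\nabla_{j}h_{kl}-\nabla_{j}H\,h_{kl}\bigr)+\frac{H^{2n}}{nK^{2}}\nabla_{i}\!\left(\tfrac{K}{H^{n}}\right)\nabla_{j}\!\left(\tfrac{K}{H^{n}}\right)=\nabla_{i}h^{-1}_{kl}\nabla_{j}h_{kl}+\frac{\nabla_{i}K\nabla_{j}K}{nK^{2}},
\end{equation*}
which I verify by expanding both products on the left, collapsing the traces $h^{-1}_{km}h^{-1}_{ln}h_{mn}=h^{-1}_{kl}$ and $h^{-1}_{kl}h_{kl}=n$, invoking $\nabla_{i}K=Kh^{-1}_{kl}\nabla_{i}h_{kl}$ once more, and observing that the two $\nabla H\nabla K/(HK)$ cross terms and the $n\nabla H\nabla H/H^{2}$ contributions cancel between the two summands on the left. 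Substituting this identity into \eqref{hij1} and collecting the resulting multiples of $\nabla_{i}K\nabla_{j}K$, whose coefficient becomes $f''(K)+f'(K)/K-f'(K)/(nK)$, gives exactly \eqref{hij2}. The only genuine obstacle is index bookkeeping: the Simons-type commutation applied to $h_{ij}$ (which is not trace-free) produces several contributions that partially cancel, and the cleanest way to carry out both steps is to work in an orthonormal frame diagonalizing $h_{ij}$ at a point, so that $h^{-1}_{kl}$ is just the diagonal matrix of reciprocal principal curvatures and sign errors are minimized.
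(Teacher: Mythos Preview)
Your proposal is correct and follows essentially the same route as the paper: compute $\nabla_{ij}K$ from $\nabla_iK=Kh^{-1}_{kl}\nabla_ih_{kl}$ together with the Simons-type commutation formula to obtain \eqref{hij1}, and then pass to \eqref{hij2} via the algebraic identity you wrote, which is exactly the paper's equation \eqref{kk}. The only cosmetic difference is your suggestion to work in a frame diagonalizing $h_{ij}$, which the paper does not invoke but which changes nothing substantive.
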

\begin{proof}
Since
$K=\det h_{ij}/ \det g_{ij}$, then
\begin{equation}\label{k1}
\nabla_{j}K=Kh^{-1}_{kl}\nabla_{j}h_{kl}.
\end{equation}
This together with Gauss equation, we have
\begin{equation}
\begin{split}
\label{hij}
\nabla_{ij}K&=Kh^{-1}_{kl}\nabla_{ij}h_{kl}+Kh^{-1}_{mn}\nabla_{i}h_{mn}h^{-1}_{kl}\nabla_{j}h_{kl}+K\nabla_{i}h^{-1}_{kl}\nabla_{j}h_{kl}\\
&=Kh^{-1}_{kl}(\nabla_{kl}h_{ij}+(h_{ij}h_{km}-h_{im}h_{kj})h_{ml}+(h_{il}h_{km}-h_{im}h_{kl})h_{mj})\\
&\quad +\frac{\nabla_{i}K\nabla_{j}K}{K}+K\nabla_{i}h^{-1}_{kl}\nabla_{j}h_{kl}\\
&=K\Lambda h_{ij}+HKh_{ij}-nh_{ik}h_{kj}K+\frac{\nabla_{i}K\nabla_{j}K}{K}+K\nabla_{i}h^{-1}_{kl}\nabla_{j}h_{kl}.
\end{split}
\end{equation}
Substituting \eqref{hij} into (2.4), we get
\begin{equation}
\begin{split}
\label{FK}
\frac{\partial }{\partial t}h_{ij}&=f^{'}(K)K\Lambda h_{ij}+f^{'}(K)HKh_{ij}-(f+nKf^{'}(K))h_{ik}h_{kj}+\left(f^{''}(K)+\frac{f^{'}(K)}{K}\right)\nabla_{i}K\nabla_{j}K\\
&\quad +Kf^{'}(K)\nabla_{i}h^{-1}_{kl}\nabla_{j}h_{kl}.
\end{split}
\end{equation}
Since
\begin{equation*}
\begin{split}
\label{Up3}
\frac{1}{nK^{2}}\nabla_{i}K\nabla_{j}K+\nabla_{i}h^{-1}_{kl}\nabla_{j}h_{kl}&=\frac{H^{2n}}{nK^{2}}\nabla_{i}\left(\frac{K}{H^{n}}\right)\nabla_{j}\left(\frac{K}{H^{n}}\right)-\frac{n}{H^{2}}\nabla_{i}H\nabla_{j}H\\
&\quad + \frac{1}{HK}(\nabla_{i}H\nabla_{j}K+\nabla_{j}K\nabla_{i}H)-h^{-1}_{km}h^{-1}_{ln}\nabla_{i}h_{mn}\nabla_{j}h_{kl},
\end{split}
\end{equation*}
and
\begin{equation*}
\begin{split}
\label{Up3}
&\frac{1}{H^{2}}h^{-1}_{km}h^{-1}_{ln}(H\nabla_{i}h_{mn}-\nabla_{i}Hh_{mn})(H\nabla_{j}h_{kl}-\nabla_{j}Hh_{kl})\\
&=h^{-1}_{km}h^{-1}_{ln}\nabla_{i}h_{mn}\nabla_{j}h_{kl}-\frac{1}{HK}(\nabla_{i}H\nabla_{j}K+\nabla_{i}K\nabla_{j}H)+\frac{n}{H^{2}}\nabla_{i}H \nabla_{j}H.
\end{split}
\end{equation*}
This implies
\begin{equation}
\begin{split}
\label{kk}
&\frac{1}{nK^{2}}\nabla_{i}K \nabla_{j}K+\nabla_{i}h^{-1}_{kl}\nabla_{j}h_{kl}\\
&=-\frac{1}{H^{2}}h^{-1}_{km}h^{-1}_{lm}(H\nabla_{i}h_{mn}-\nabla_{i}Hh_{mn})(H\nabla_{j}h_{kl}-\nabla_{j}Hh_{kl})\\
&\quad +\frac{H^{2n}}{nK^{2}}\nabla_{i}\left(\frac{K}{H^{n}}\right)\nabla_{j}\left(\frac{K}{H^{n}}\right).
\end{split}
\end{equation}
Then, substituting \eqref{kk} into \eqref{FK}, we have the desired result.
\end{proof}
\begin{lem}\label{KHTT}
Under the flow equation \eqref{ca}, we have
\begin{equation}
\begin{split}
\label{htt}
\frac{\partial }{\partial t}H&=f^{'}(K)K\Lambda H+f^{'}K H^{2}+\frac{H^{2n}f^{'}(K)}{nK}\Big|\nabla\left( \frac{K}{H^{n}}\right)\Big|^{2}+(f-nKf^{'}(K))|A|^{2}\\
&\quad +\left(f^{''}(K)+\frac{f^{'}(K)}{K}-\frac{f^{'}(K)}{nK}\right)|\nabla K|^{2}-\frac{Kf^{'}(K)}{H^{2}}|H\nabla_{i}h_{kl}-\nabla_{i}Hh_{kl}|^{2}_{g,h},
\end{split}
\end{equation}
and
\begin{equation}
\begin{split}
\label{Kt}
\frac{\partial }{\partial t}K=Kf^{'}(K)\Lambda K+Kf^{''}(K)|\nabla K|^{2}_{h}+fKH.
\end{split}
\end{equation}
\end{lem}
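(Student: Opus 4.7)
The plan is to derive both identities by directly differentiating the defining expressions $H = g^{ij}h_{ij}$ and $K = \det(h_{ij})/\det(g_{ij})$ in $t$ and feeding in the evolution equations established in Proposition \ref{pa} together with the rewritten form \eqref{hij2}. The key observation is that \eqref{hij2} already isolates precisely the curvature and gradient combinations appearing in the announced formula for $\partial_{t}H$, so the argument reduces to a clean trace calculation.

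For the mean curvature, I would write $\partial_{t}H = (\partial_{t}g^{ij})h_{ij} + g^{ij}(\partial_{t}h_{ij})$. The first summand, by \eqref{a2}, equals $2f(K)\,g^{ik}g^{jl}h_{kl}h_{ij} = 2f(K)|A|^{2}$. For the second summand I would trace \eqref{hij2} against $g^{ij}$ term by term: since $g^{ij}$ is parallel, $g^{ij}\Lambda h_{ij} = \Lambda H$ and $g^{ij}\nabla_{i}K\nabla_{j}K = |\nabla K|^{2}$; the term $f'(K)HK h_{ij}$ contributes $f'(K)KH^{2}$; the cubic term gives $-(f+nKf'(K))|A|^{2}$; and the bracketed piece passes through unchanged, its first half collapsing to $|H\nabla_{i}h_{kl}-\nabla_{i}H h_{kl}|^{2}_{g,h}/H^{2}$ by the definition of the $|\cdot|_{g,h}$-norm, and its second half to $H^{2n}/(nK^{2})\cdot|\nabla(K/H^{n})|^{2}$. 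Combining the two $|A|^{2}$ contributions yields the stated coefficient $(f-nKf'(K))|A|^{2}$, completing \eqref{htt}.

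For the Gauss curvature I would use the logarithmic derivative $\partial_{t}\log K = h^{-1}_{ij}\partial_{t}h_{ij} - g^{ij}\partial_{t}g_{ij}$ (Jacobi's formula). By \eqref{a1} the metric term equals $-g^{ij}(-2fh_{ij}) = 2fH$. Inserting \eqref{a4} into the first piece, the $f''$ and $f'$ summands contribute $f''(K)|\nabla K|^{2}_{h} + f'(K)\Lambda K$, while the cubic term $-f h_{ik}h^{k}_{j}$ contracts with $h^{-1}_{ij}$ to $-fH$ via the identity $h^{-1}_{ij}h_{ik}g^{km}h_{mj} = g^{km}h_{mk} = H$. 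Summing gives $\partial_{t}\log K = f'(K)\Lambda K + f''(K)|\nabla K|^{2}_{h} + fH$, and multiplication by $K$ produces \eqref{Kt}.

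The main obstacle is purely bookkeeping: one must be careful with the symmetrization of the bracketed gradient terms in \eqref{hij2} and match the contracted $g^{ij}$-trace of $h^{-1}_{km}h^{-1}_{ln}(\cdots)(\cdots)$ to the $|\cdot|^{2}_{g,h}$-norm defined in Section \ref{Sec2}. No additional geometric input beyond Codazzi and the identities already derived in Proposition \ref{pa} and the preceding lemma is needed, so I expect no genuine analytic difficulty.
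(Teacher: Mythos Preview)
Your proposal is correct and follows essentially the same route as the paper: trace \eqref{hij2} against $g^{ij}$, add the metric contribution $(\partial_{t}g^{ij})h_{ij}=2f|A|^{2}$ from \eqref{a2}, and combine the two $|A|^{2}$ terms for \eqref{htt}; for \eqref{Kt} use Jacobi's formula with \eqref{a1} and \eqref{a4} exactly as the paper does. The only cosmetic difference is that the paper cites \eqref{hij1} while in fact inserting the rewritten form \eqref{hij2}, which is what you do explicitly.
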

\begin{proof}
Applying \eqref{a2} and \eqref{hij1}, we have
\begin{equation}
\begin{split}
\label{HT}
\frac{\partial }{\partial t}H&=\frac{\partial}{\partial t}(g^{ij}h_{ij})\\
&=f^{'}(K)K\Lambda H+f^{'}(K)KH^{2}-(f+nKf^{'}(K))|A|^{2}+\left(f^{''}(K)+\frac{f^{'}(K)}{K}-\frac{f^{'}(K)}{nK^{2}}\right)|\nabla K|^{2}\\
&\quad -\frac{Kf^{'}(K)}{H^{2}}|H\nabla_{i}h_{kl}-\nabla_{i}Hh_{kl}|^{2}_{g,h}+\frac{H^{2n}Kf^{'}(K)}{nK^{2}}\Big|\nabla\left( \frac{K}{H^{n}}\right)\Big|^{2}-g^{im}g^{nj}\partial_{t}g_{mn}h_{ij}\\
&=f^{'}(K)K\Lambda H+f^{'}K H^{2}+\frac{H^{2n}f^{'}(K)}{nK}\Big|\nabla\left( \frac{K}{H^{n}}\right)\Big|^{2}+(f-nKf^{'}(K))|A|^{2}\\
&\quad +\left(f^{''}+\frac{f^{'}}{K}-\frac{f^{'}}{nK}\right)|\nabla K|^{2}-\frac{Kf^{'}(K)}{H^{2}}|H\nabla_{i}h_{kl}-\nabla_{i}Hh_{kl}|^{2}_{g,h}.
\end{split}
\end{equation}
On the other hand, by the definition of $K$, one has
\begin{equation}
\begin{split}
\label{KT}
\frac{\partial }{\partial t}K&=\frac{\det h_{ij}}{\det g_{ij}}(h^{-1}_{ij}\partial_{t}h_{ij}-g^{ij}\partial_{t}g_{ij})\\
&=K h^{-1}_{ij}(f^{''}(K)\nabla_{i}K\nabla_{j}K+f^{'}(K)\nabla_{ij}K-fh_{ik}h_{kj})+2f(K)Kg^{ij}h_{ij}\\
&=Kf^{'}(K)\Lambda K+Kf^{''}(K)|\nabla K|^{2}_{h}+fKH.
\end{split}
\end{equation}
The proof is completed.
\end{proof}

\section{Pinching estimates}

\label{Sec4}

In this section, we mainly prove the following property.
\begin{prop}\label{pink}
 There exists a constant $0<C(n)<1/n^{n}$ such that if the initial hypersurface $M_{0}$ pinched in the sense that
\begin{equation}\label{pin}
\frac{K(x,t)}{H^{n}(x,t)}\geq C(n), \quad \forall x\in  \sn,
\end{equation}
then the pinching \eqref{pin} is preserved under the flow \eqref{ca}.
\end{prop}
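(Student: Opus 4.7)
The plan is to apply the parabolic minimum principle to the scale-invariant quantity $Q := K/H^{n}$. By the arithmetic-geometric mean inequality, $Q \leq 1/n^{n}$ with equality precisely at umbilical points, so preservation of \eqref{pin} reduces to showing that $\min_{x}Q(\cdot,t)$ is non-decreasing in $t$. Using Lemma \ref{KHTT}, I would compute $\partial_{t}Q = \partial_{t}K/H^{n} - (nK/H^{n+1})\,\partial_{t}H$ and reorganize the diffusion via the identity
\begin{equation*}
\Lambda Q = \frac{\Lambda K}{H^{n}} - \frac{nK\Lambda H}{H^{n+1}} - \frac{2n}{H^{n+1}}h^{-1}_{kl}\nabla_{k}K\nabla_{l}H + \frac{n(n+1)K}{H^{n+2}}|\nabla H|_{h}^{2}
\end{equation*}
so as to put the result in the form $\partial_{t}Q = f'(K)K\,\Lambda Q + \mathcal{G} + \mathcal{Z}$, where $\mathcal{Z}$ collects the zero-order terms and $\mathcal{G}$ the gradient terms.

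At a spatial minimum $x_{0}$ of $Q(\cdot,t)$ one has $\Lambda Q(x_{0}) \geq 0$, and $\nabla Q(x_{0}) = 0$ combined with \eqref{k1} gives the two critical-point relations
\begin{equation*}
\nabla_{i}K = \frac{nK}{H}\nabla_{i}H, \qquad h^{-1}_{kl}\nabla_{i}h_{kl} = \frac{n}{H}\nabla_{i}H.
\end{equation*}
Substituting these, the zero-order part should collapse to
\begin{equation*}
\mathcal{Z}\big|_{x_{0}} = \bigl(nKf'(K)-f(K)\bigr)\frac{K}{H^{n+1}}\bigl(n|A|^{2}-H^{2}\bigr) \geq 0,
\end{equation*}
the sign being a consequence of the Cauchy-Schwarz inequality $n|A|^{2} \geq H^{2}$ together with Condition 1.1(ii), which yields $nKf'-f \geq \alpha_{1}f > 0$.

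The hard part, and the main obstacle, is to show $\mathcal{G}|_{x_{0}} \geq 0$. After substitution of the critical-point relations, $\mathcal{G}|_{x_{0}}$ organizes into a linear combination of $|\nabla H|_{h}^{2}$, $|\nabla H|^{2}$ (with respect to $g$), and the Codazzi-type quantity $|H\nabla_{i}h_{kl}-\nabla_{i}H\,h_{kl}|_{g,h}^{2}$. The latter is inherited from \eqref{htt} with the manifestly favourable coefficient $nK^{2}f'(K)/H^{n+3}$ after the $-nK/H^{n+1}$ weighting. Two ingredients should carry the analysis: first, Condition 1.1(iii), that is $Kf''/f'+1-1/n > 0$, is tailored so that the $f''$-contributions to $|\nabla H|^{2}$ cannot overwhelm the positive Laplacian-correction term $f'(K)K\cdot n(n-1)K|\nabla H|_{h}^{2}/H^{n+2}$; second, a tensor inequality bounding $|H\nabla_{i}h_{kl}-\nabla_{i}H\,h_{kl}|_{g,h}^{2}$ from below by a multiple of $|\nabla H|^{2}$, where the pinching hypothesis $K/H^{n} \geq C(n)$ enters because it provides a lower bound on the smallest principal curvature in terms of $H$. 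Choosing $C(n) < 1/n^{n}$ small enough that both ingredients close up uniformly along the flow, one deduces $\mathcal{G}|_{x_{0}} \geq 0$, whence $\partial_{t}Q(x_{0},t) \geq f'(K)K\,\Lambda Q(x_{0},t) \geq 0$, and the parabolic minimum principle yields the preservation of \eqref{pin} under the flow \eqref{ca}.
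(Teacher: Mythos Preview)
Your outline matches the paper's strategy: apply the minimum principle to $K/H^{n}$, handle the zero-order reaction term via Condition (ii), and control the gradient terms by playing the Codazzi quantity off against the residual $\nabla K$ terms. Two points of imprecision are worth flagging.

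First, the role you assign to Condition~1.1(iii) is slightly off. At a spatial minimum one finds (in your notation, after substituting $\nabla K=(nK/H)\nabla H$)
\[
\mathcal{G}\big|_{x_{0}}=\frac{n^{2}K^{2}f'}{H^{n+2}}\Bigl(\tfrac{Kf''}{f'}+1-\tfrac1n\Bigr)\Bigl(|\nabla H|^{2}_{h}-\tfrac{n}{H}|\nabla H|^{2}\Bigr)+\frac{nK^{2}f'}{H^{n+3}}\bigl|H\nabla_{i}h_{kl}-\nabla_{i}H\,h_{kl}\bigr|^{2}_{g,h}.
\]
The factor $|\nabla H|^{2}_{h}-(n/H)|\nabla H|^{2}$ has no sign, so the positivity of $Kf''/f'+1-1/n$ by itself does not help; what the paper actually uses is the \emph{upper} bound $\leq\beta$ in (iii) together with the estimate $\bigl|h^{-1}_{ij}-(n/H)g^{ij}\bigr|\leq \varepsilon^{2}/(4n\beta H)$ from Lemma~\ref{GH}. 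This second conclusion of Lemma~\ref{GH}---not merely the eigenvalue lower bound $h_{ij}\geq\varepsilon Hg_{ij}$ that you cite---is what makes the indefinite term small enough to be absorbed by the Codazzi term, which by Huisken's inequality contributes at least $\frac{n\varepsilon^{2}}{2}\frac{K^{2}f'}{H^{n+3}}|\nabla H|^{2}$.

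Second, ``choosing $C(n)$ small enough'' is backwards: the argument requires $C(n)$ sufficiently \emph{close to} $1/n^{n}$, so that $\varepsilon(C)$ is close to $1/n$ and the bound on $|h^{-1}-(n/H)g|$ is tight. With these corrections your sketch coincides with the paper's proof.
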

 Before proving Proposition \ref{pink}, we do some preparation. Using the maximum principle to \eqref{Kt} of Lemma \ref{KHTT}, we first get the following result.
\begin{coro}\label{KPR}
Under the flow \eqref{ca}. Suppose $C$ is a constant and $K\geq C$ at $t=0$, then $K\geq C$ for all time. As a result, if $M_{t}$ is strictly convex  at $t=0$, then it remains so.
\end{coro}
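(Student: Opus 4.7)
The plan is to apply the parabolic maximum principle directly to the scalar evolution equation \eqref{Kt} from Lemma \ref{KHTT}, namely
\[
\frac{\partial K}{\partial t} = Kf'(K)\Lambda K + Kf''(K)|\nabla K|^{2}_{h} + f(K)KH,
\]
where $\Lambda = h^{-1}_{kl}\nabla_{k}\nabla_{l}$. The coefficient $Kf'(K)$ of the second-order operator is strictly positive by Condition 1.1(i) combined with $K>0$, so the equation is quasilinear parabolic as long as $M_{t}$ is strictly convex and $h^{-1}$ is positive definite.

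First I would restrict to a short time interval $[0,T_{0}]$ on which the solution exists and strict convexity is preserved, which is available from short-time existence together with continuity in $t$ of the principal curvatures. On such an interval, at any spatial point $x_{0}\in\sn$ realising the minimum of $K(\cdot,t)$, Hamilton's trick gives $\nabla K(x_{0},t)=0$ and $\nabla^{2}K(x_{0},t)\geq 0$ as symmetric forms, so that $\Lambda K(x_{0},t)=h^{-1}_{kl}\nabla_{k}\nabla_{l}K(x_{0},t)\geq 0$ since $h^{-1}$ is positive definite there. The gradient term vanishes, and since $f(K)>0$, $K>0$, and $H>0$ on a convex hypersurface, the reaction term $f(K)KH$ is nonnegative. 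Therefore
\[
\frac{d}{dt}K_{\min}(t)\geq 0,
\]
so $K_{\min}(t)\geq K_{\min}(0)\geq C$ throughout $[0,T_{0}]$.

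To finish, I would rule out any loss of strict convexity. If $M_{0}$ is strictly convex, set $C_{0}:=\min_{\sn}K(\cdot,0)>0$; the previous step gives $K\geq C_{0}>0$ as long as strict convexity holds. If some principal curvature were to vanish at a first time $t^{*}$, continuity from the strictly convex initial data would force $K(\cdot,t^{*})=\prod_{i}\lambda_{i}=0$ at the offending point, contradicting $K\geq C_{0}$. Hence the set of times at which strict convexity holds is both open and closed in the existence interval, and strict convexity persists for as long as the flow exists. The main delicate point is precisely this coupling: the maximum principle on \eqref{Kt} requires $h^{-1}$ to be positive definite, while preservation of that positivity is what we are trying to conclude. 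The apparent circularity is broken by running the two conclusions simultaneously via a continuity-in-time argument on the maximal interval of strict convexity, so that the threshold $t^{*}$ can never actually be reached.
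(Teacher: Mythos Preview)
Your proposal is correct and follows exactly the approach the paper indicates: the paper simply states that Corollary \ref{KPR} follows by applying the maximum principle to the evolution equation \eqref{Kt} for $K$ from Lemma \ref{KHTT}, without giving further details. You have filled in those details carefully, including the continuity argument for persistence of strict convexity, which the paper leaves implicit.
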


Along the similar lines as revealed in \cite[Lemma 4.4]{Es22}, we have
\begin{lem}\label{GH}
There exists a constant $0<C<1/n^{n}$ such that if a convex hypersurface $\mathcal{N}$ satisfies \eqref{pin} with this constant $C$, then there exists $0<\varepsilon=\varepsilon(C)\leq 1/n$ such that
\begin{equation*}\label{}
h_{ij}\geq \varepsilon H g_{ij}
\end{equation*}
and
\begin{equation*}\label{}
\Big|h^{-1}_{ij}-\frac{n}{H}g^{ij}\Big|\leq \frac{\varepsilon^{2}}{4n\beta H}
\end{equation*}
on $\mathcal{N}$. Furthermore, $\varepsilon\rightarrow 1/n$ as $C\rightarrow 1/n^{n}$.
\end{lem}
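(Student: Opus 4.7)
The plan is pointwise. At any point of $\mathcal N$, diagonalize $h_{ij}$ with respect to $g_{ij}$ and let $\kappa_1,\dots,\kappa_n>0$ denote the principal curvatures there (positivity is guaranteed by \eqref{pin} together with Corollary \ref{KPR}). Set $y_i=\kappa_i/H$, so $\sum_i y_i=1$ and, by hypothesis, $\prod_i y_i = K/H^n \geq C$. At this diagonalization point $h_{ij}\geq\varepsilon H g_{ij}$ reduces to $y_i\geq\varepsilon$, while the eigenvalues of $h^{-1}_{ij}-(n/H)g^{ij}$ are $(1/H)(1/y_i-n)$, so the whole lemma reduces to producing $\varepsilon=\varepsilon(C)\in(0,1/n]$ with $\varepsilon(C)\to 1/n$ as $C\to 1/n^n$, such that
\begin{equation*}
y_i\geq\varepsilon\quad\text{and}\quad \left|\frac{1}{y_i}-n\right|\leq\frac{\varepsilon^{2}}{4n\beta}\qquad (i=1,\dots,n).
\end{equation*}

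For the first bound I would apply the AM--GM inequality to the $n-1$ factors $\{y_j:j\neq i\}$:
\begin{equation*}
C\leq\prod_j y_j\leq y_i\left(\frac{1-y_i}{n-1}\right)^{n-1}=:\phi(y_i).
\end{equation*}
The function $\phi:[0,1]\to\mathbb{R}$ is smooth, vanishes at the endpoints, and attains its unique maximum $\phi(1/n)=1/n^n$ at $t=1/n$. Hence for each $C<1/n^n$ the super-level set $\{\phi\geq C\}$ is a closed interval $[t_-(C),t_+(C)]$ containing $1/n$, and both endpoints converge to $1/n$ as $C\uparrow 1/n^n$. Setting $\varepsilon(C):=t_-(C)\leq 1/n$ then delivers $y_i\geq\varepsilon$ for every $i$, together with the required limit behaviour.

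Combining this lower bound with $\sum_j y_j=1$ produces the matching upper bound $y_i\leq 1-(n-1)\varepsilon$, hence $|y_i-1/n|\leq(n-1)(1/n-\varepsilon)$, and therefore
\begin{equation*}
\left|\frac{1}{y_i}-n\right|=\frac{n|y_i-1/n|}{y_i}\leq\frac{n(n-1)(1/n-\varepsilon)}{\varepsilon}.
\end{equation*}
As $C\to 1/n^n$ the right-hand side tends to $0$ while $\varepsilon^{2}/(4n\beta)\to 1/(4n^{3}\beta)>0$, so any $C$ taken sufficiently close to $1/n^n$ (threshold depending on $n$ and the constant $\beta$ from Condition 1.1(iii)) forces both inequalities to hold with the same $\varepsilon=\varepsilon(C)$. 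The argument is essentially the quantitative equality case of AM--GM; the one delicate step, and thus the main obstacle, is that a \emph{single} $\varepsilon(C)$ must simultaneously support the second, $\beta$-dependent, estimate. This is what compels $C$ to be chosen close to $1/n^n$, and quantitatively it reduces to the Taylor expansion $\phi(t)=1/n^n-\tfrac{c_n}{2}(t-1/n)^2+O((t-1/n)^3)$ near the maximum, which yields $1/n-\varepsilon(C)=O(\sqrt{1/n^n-C})$ and translates closeness of $C$ to $1/n^n$ into closeness of all principal curvatures to $H/n$.
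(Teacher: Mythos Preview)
The paper does not actually prove this lemma; it merely writes ``Along the similar lines as revealed in \cite[Lemma 4.4]{Es22}, we have'' and states the result. Your argument is correct and is the standard one in this circle of ideas (it is essentially what one finds in Chow \cite{CW85} and Schulze \cite{S06}, and presumably in \cite{Es22} as well): normalize the principal curvatures, use AM--GM on $n-1$ of them to trap each $y_i$ in the super-level set of the unimodal function $\phi(t)=t((1-t)/(n-1))^{n-1}$, and then read off both the lower bound $y_i\ge\varepsilon=t_-(C)$ and the two-sided bound $|y_i-1/n|\le(n-1)(1/n-\varepsilon)$.

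Two small remarks. First, your appeal to Corollary~\ref{KPR} for positivity of the $\kappa_i$ is misplaced: that corollary concerns the flow, whereas the lemma is about a fixed hypersurface $\mathcal N$. Strict positivity follows directly from convexity ($\kappa_i\ge0$) together with $K>0$, which is immediate from $K/H^n\ge C>0$. Second, the norm $|h^{-1}_{ij}-(n/H)g^{ij}|$ used later in the paper (see the estimate \eqref{mo2}) is the operator norm with respect to $g$, so your reduction to the scalar inequalities $|1/y_i-n|\le\varepsilon^2/(4n\beta)$ is exactly what is needed. The closing Taylor-expansion remark is a nice quantitative gloss but is not required for the proof.
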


Now, we give the evolution equation of $K/H^{n}$.
\begin{lem}Under the flow equation \eqref{ca}, we have
\begin{equation}
\begin{split}
\label{KHN}
&\frac{\partial}{\partial t}\left(\frac{K}{H^{n}}\right)=Kf^{'}(K)\left[\Lambda \left(\frac{K}{H^{n}}\right)+\frac{(1/n-1)}{K}\Big\langle\nabla K,\nabla\left(\frac{K}{H^{n}}\right)\Big\rangle_{h}+\frac{n+1}{H}\Big\langle \nabla H,  \nabla \left( \frac{K}{H^{n}}\right)\Big\rangle_{h}\right]\\
&\quad +\frac{nK}{H^{n+1}}(-f+nK f^{'}(K))\left(|A|^{2}-\frac{1}{n}H^{2}\right)+\frac{1}{H^{n}}\left(Kf^{''}+f^{'}-\frac{f^{'}}{n}\right)|\nabla K|^{2}_{e}\\
&\quad +\frac{nK^{2}f^{'}(K)}{H^{n+3}}|H\nabla_{i}h_{kl}-\nabla_{i}Hh_{kl}|^{2}_{g,h}-H^{n-1}f^{'}(K)\Big|\nabla \left(\frac{K}{H^{n}}\right)\Big|^{2},
\end{split}
\end{equation}
where $e=(h^{-1}_{ij}-n/H\cdot g^{-1}_{ij})^{-1}$.
\end{lem}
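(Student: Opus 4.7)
The plan is to differentiate $K/H^{n}$ using the quotient rule in time and then substitute the evolution equations for $K$ and $H$ from Lemma \ref{KHTT}. Starting from
\[
\frac{\partial}{\partial t}\left(\frac{K}{H^{n}}\right) = \frac{1}{H^{n}}\frac{\partial K}{\partial t} - \frac{nK}{H^{n+1}}\frac{\partial H}{\partial t},
\]
I would plug in \eqref{htt} and \eqref{Kt}. The right-hand side then breaks into three kinds of terms: the second-order (Laplacian) terms involving $\Lambda K$ and $\Lambda H$; zeroth-order reaction terms involving $fKH$, $f'KH^{2}$, and $|A|^{2}$; and various gradient terms, including $|\nabla K|^{2}$, $|\nabla K|^{2}_{h}$, $|\nabla H|^{2}_{h}$, $\langle\nabla K,\nabla H\rangle_{h}$, $|\nabla(K/H^{n})|^{2}$, and the Codazzi-type quantity $|H\nabla_{i}h_{kl}-\nabla_{i}Hh_{kl}|^{2}_{g,h}$.

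The main step is to reassemble the Laplacian contributions into $\Lambda(K/H^{n})$ plus gradient corrections. Since $\Lambda = h^{-1}_{kl}\nabla_{k}\nabla_{l}$ is a second-order operator, a direct product-rule computation yields
\[
\frac{1}{H^{n}}\Lambda K - \frac{nK}{H^{n+1}}\Lambda H = \Lambda\!\left(\frac{K}{H^{n}}\right) + \frac{2n}{H^{n+1}}\langle \nabla K,\nabla H\rangle_{h} - \frac{n(n+1)K}{H^{n+2}}|\nabla H|^{2}_{h}.
\]
Multiplying by $Kf'(K)$ produces the $Kf'\Lambda(K/H^{n})$ term of \eqref{KHN} together with cross-gradient corrections. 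I would then rewrite these corrections in the mixed form appearing in the target identity using
\[
\nabla\!\left(\frac{K}{H^{n}}\right) = \frac{1}{H^{n}}\nabla K - \frac{nK}{H^{n+1}}\nabla H,
\]
which allows $\langle \nabla K,\nabla H\rangle_{h}$ and $|\nabla H|^{2}_{h}$ to be reexpressed in terms of $\langle \nabla K,\nabla(K/H^{n})\rangle_{h}$ and $\langle \nabla H,\nabla(K/H^{n})\rangle_{h}$. A short bookkeeping check confirms that the coefficients that emerge are precisely $(1/n-1)/K$ and $(n+1)/H$, as stated.

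Next, I would combine the remaining zeroth-order terms. The scalar piece $fKH/H^{n}$ coming from $\partial_{t}K$, together with the $-nK f'KH^{2}/H^{n+1}$ contribution from the $f'KH^{2}$ term in $\partial_{t}H$, combines with the $-nK(f - nKf')|A|^{2}/H^{n+1}$ contribution to factor as
\[
\frac{nK(-f+nKf'(K))}{H^{n+1}}\!\left(|A|^{2}-\frac{H^{2}}{n}\right),
\]
matching the second line of \eqref{KHN}. The $|\nabla(K/H^{n})|^{2}$ and Codazzi-type gradient contributions are inherited directly from $\partial_{t}H$ after multiplication by $-nK/H^{n+1}$. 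Finally, the $|\nabla K|^{2}_{h}$ piece from $Kf''|\nabla K|^{2}_{h}$ in $\partial_{t}K$ and the $|\nabla K|^{2}$ piece from $\partial_{t}H$ recombine with the extra $(1/n-1)f'|\nabla K|^{2}_{h}/H^{n}$ generated by the Laplacian rewriting above, producing the single term $(Kf''+f'-f'/n)|\nabla K|^{2}_{e}/H^{n}$, since by definition of $e$ one has $|\nabla K|^{2}_{e} = |\nabla K|^{2}_{h} - (n/H)|\nabla K|^{2}$.

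The only nontrivial obstacle is the gradient bookkeeping: several $|\nabla K|^{2}_{h}$ and $\langle\nabla K,\nabla H\rangle_{h}$ contributions appear with different coefficients in the two evolution equations, and one must track signs and powers of $n$ carefully when combining them into the normal forms dictated by \eqref{KHN}. No new geometric ingredient is required beyond the Laplacian rearrangement for $K/H^{n}$; the argument is purely computational once that rearrangement is in place.
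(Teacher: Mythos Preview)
Your proposal is correct and follows essentially the same route as the paper: differentiate $K/H^{n}$ via the quotient rule, substitute \eqref{htt} and \eqref{Kt}, rewrite $\frac{1}{H^{n}}\Lambda K-\frac{nK}{H^{n+1}}\Lambda H$ as $\Lambda(K/H^{n})$ plus gradient corrections, express those corrections through $\langle\nabla K,\nabla(K/H^{n})\rangle_{h}$ and $\langle\nabla H,\nabla(K/H^{n})\rangle_{h}$, and finally absorb the leftover $|\nabla K|^{2}_{h}$ and $|\nabla K|^{2}$ pieces into $|\nabla K|^{2}_{e}$. The only slip is a sign in your last paragraph: the residual from the Laplacian rewriting is $-(1/n-1)f'|\nabla K|^{2}_{h}/H^{n}=(f'-f'/n)|\nabla K|^{2}_{h}/H^{n}$, which is exactly what is needed to combine with $Kf''|\nabla K|^{2}_{h}/H^{n}$.
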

\begin{proof}
Clearly,
\begin{equation*}
\begin{split}
\label{Up3}
&\frac{\partial}{\partial t}\left(\frac{K}{H^{n}}\right)=\frac{1}{H^{n}}\partial_{t}K-n  H^{-(n+1)}K\partial_{t}H\\
&=\frac{1}{H^{n}}(Kf^{'}(K)\Lambda K+Kf^{''}(K)|\nabla K|^{2}_{h}+fKH)\\
& \quad -nKH^{-(n+1)}\left[ f^{'}(K)K\Lambda H+ f^{'}(K)KH^{2}+\frac{H^{2n}f^{'}(K)}{nK}\Big| \nabla \left(\frac{K}{H^{n}}\right) \Big|^{2}+(f-nKf^{'}(K))|A|^{2}
\right.\\
&\left. \quad \quad \quad \quad \quad \quad \quad + \left(f^{''}(K)+\frac{f^{'}(K)}{K}-\frac{f^{'}(K)}{nK}\right)|\nabla K|^{2}-\frac{Kf^{'}(K)}{H^{2}}|H\nabla_{i}h_{kl}-\nabla_{i}Hh_{kl}|^{2}_{g,h}\right].
\end{split}
\end{equation*}
Since
\begin{equation*}
\begin{split}
\label{Up3}
\Lambda \left(\frac{K}{H^{n}} \right)=\frac{\Lambda K}{H^{n}}-2n\frac{\langle \nabla H, \nabla K\rangle_{h}}{H^{n+1}}+\frac{n(n+1)K|\nabla H|^{2}_{h}}{H^{n+2}}-\frac{nK\Lambda H}{H^{n+1}}.
\end{split}
\end{equation*}
Then, we obtain
\begin{equation*}
\begin{split}
\label{Up3}
\frac{\partial}{\partial t}\left(\frac{K}{H^{n}}\right)&=Kf^{'}(K)\left[\Lambda \left( \frac{K}{H^{n}}\right)+\frac{2n\langle \nabla H, \nabla K\rangle_{h}}{H^{n+1}}-\frac{n(n+1)K|\nabla H|^{2}_{h}}{H^{n+2}}\right]\\
&\quad +\frac{Kf^{''}(K)|\nabla K|^{2}_{h}}{H^{n}}+\frac{f(K)KH}{H^{n}}-nKH^{-(n+1)}f^{'}(K)KH^{2}-nKH^{-(n+1)}\frac{H^{2n}f^{'}(K)}{nK}\Big| \nabla \left( \frac{K}{H^{n}}\right)\Big|^{2}\\
&\quad -nKH^{-(n+1)}(f-nKf^{'}(K))|A|^{2}-nKH^{-(n+1)}\left( f^{''}(K)+\frac{f^{'}(K)} {K}-\frac{f^{'}(K)}{nK}\right)|\nabla K|^{2} \\
&\quad +nK H^{-(n+1)}\frac{Kf^{'}(K)}{H^{2}}|H\nabla_{i}h_{kl}-\nabla_{i}Hh_{kl}|^{2}_{g,h}.
\end{split}
\end{equation*}
Due to
\begin{equation*}\label{}
\Big\langle \nabla K, \nabla\left(\frac{K}{H^{n}} \right)\Big\rangle_{h}=\frac{\langle\nabla K, \nabla K\rangle_{h}}{H^{n}}-\frac{ nK}{H^{n+1}}\langle \nabla K, \nabla H\rangle_{h},
\end{equation*}
and
\begin{equation*}
\begin{split}
\label{Up3}
&\frac{(1/n-1)}{K}\Big\langle \nabla K,  \nabla \left( \frac{K}{H^{n}}\right)\Big\rangle_{h}+\frac{n+1}{H}\Big\langle \nabla H,  \nabla \left( \frac{K}{H^{n}}\right)\Big\rangle_{h}\\
&=\frac{1/n-1}{H^{n}K}|\nabla K|^{2}_{h}+\frac{2n}{H^{n+1}}\Big\langle \nabla K,  \nabla H\Big\rangle_{h}-\frac{(n+1)nK}{H^{n+2}}|\nabla H|^{2}_{h}.
\end{split}
\end{equation*}
So
\begin{equation}
\begin{split}
\label{para}
\frac{\partial}{\partial t}\left(\frac{K}{H^{n}}\right)&=Kf^{'}(K)\left[ \Lambda \left(\frac{K}{H^{n}}\right)+\frac{1/n-1}{K}\Big\langle \nabla K,  \nabla \left( \frac{K}{H^{n}}\right)\Big\rangle_{h}+\frac{n+1}{H}\Big\langle \nabla H,  \nabla \left( \frac{K}{H^{n}}\right)\Big\rangle_{h}\right]\\
&+\left( \frac{K f^{''}(K)}{H^{n}}-\frac{(1/n-1)f^{'}(K)}{H^{n}}\right)|\nabla K|^{2}_{h}-\frac{nK}{H^{n+1}}\left(f^{''}(K)+\frac{f^{'}(K)}{K}-\frac{f^{'}(K)}{nK}\right)|\nabla K|^{2}\\
&\quad +\frac{f K}{H^{n-1}}-\frac{nK^{2}f^{'}(K)}{H^{n-1}}-\frac{nK}{H^{n+1}}(f-nK f^{'}(K))|A|^{2}+\frac{nK^{2}f^{'}(K)}{H^{n+3}}|H\nabla_{i}h_{kl}-\nabla_{i}Hh_{kl}|^{2}_{g,h}\\
&\quad -H^{n-1}f^{'}(K)\Big| \nabla \left( \frac{K}{H^{n}}\right)\Big|^{2}\\
&=Kf^{'}(K)\left[ \Lambda \left(\frac{K}{H^{n}}\right)+\frac{1/n-1}{K}\Big\langle \nabla K,  \nabla \left( \frac{K}{H^{n}}\right)\Big\rangle_{h}+\frac{n+1}{H}\Big\langle \nabla H,  \nabla \left( \frac{K}{H^{n}}\right)\Big\rangle_{h}\right]\\
&\quad +\frac{1}{H^{n}}\left( Kf^{''}(K)+f^{'}(K)-\frac{f^{'}(K)}{n}\right)|\nabla K|^{2}_{e}+\frac{nK}{H^{n+1}}(-f+nKf^{'}(K))\left(|A|^{2}-\frac{1}{n}H^{2}\right)\\
&\quad +\frac{nK^{2}f^{'}(K)}{H^{n+3}}|H\nabla_{i}h_{kl}-\nabla_{i}Hh_{kl}|^{2}_{g,h} -H^{n-1}f^{'}(K)\Big| \nabla \left( \frac{K}{H^{n}}\right)\Big|^{2}.
\end{split}
\end{equation}
\end{proof}

{\bf Proof of Proposition \ref{pink}.} From Lemma \ref{GH}, there exists $\varepsilon>0$ satisfying
\[
h_{ij}\geq  \varepsilon H g_{ij}, \quad \Big| h^{-1}_{ij}-\frac{n}{H} g^{ij} \Big|\leq \frac{\varepsilon^{2}}{4 n \beta H}.
\]
On the one hand, applying \cite[Lemma 2.3(ii)]{H84}, we obtain the estimate
\begin{equation*}\label{}
|H\nabla_{i}h_{kl}-\nabla_{i}Hh_{kl}|^{2}\geq \frac{1}{2}\varepsilon^{2}H^{2}|\nabla H|^{2}
\end{equation*}
provided $h_{ij}\geq \varepsilon H g_{ij}$. Furthermore, due to $h_{ij}\leq H g_{ij}$, it suffices to have
\begin{equation}
\begin{split}
\label{mo1}
\frac{n K^{2}}{H^{n+3}}|H\nabla_{i}h_{kl}-\nabla_{i}Hh_{kl}|^{2}_{g,h}& \geq \frac{n K^{2}}{H^{n+3}} \frac{|H\nabla_{i}h_{kl}-\nabla_{i}Hh_{kl}|^{2}}{H^{2}}\\
&\geq \frac{n \varepsilon^{2}}{2}\frac{K^{2}}{H^{n+3}}|\nabla H|^{2}.
\end{split}
\end{equation}
On the other hand,
\begin{equation}
\begin{split}
\label{mo2}
\Bigg| \frac{1}{H^{n}}\left(\frac{Kf^{''}(K)}{f^{'}(K)} +1-\frac{1}{n}\right)|\nabla K|^{2}_{e}\Bigg|&\leq \frac{1}{H^{n}} \left(\frac{Kf^{''}(K)}{f^{'}(K)} +1-\frac{1}{n}\right)|\nabla K|^{2}\Big|h^{-1}_{ij}-\frac{n}{H}g^{ij}\Big|\\
&\leq \frac{K^{2}}{H^{n+3}}\beta\frac{n^{2}\varepsilon^{2}}{4n \beta}|\nabla H|^{2}+terms \ linear \ in \nabla\left(  \frac{K}{H^{n}}\right)\\
&= \frac{n\varepsilon^{2}}{4 }\frac{K^{2}}{H^{n+3}}|\nabla H|^{2}+terms \ linear \ in \ \nabla\left(  \frac{K}{H^{n}}\right).
\end{split}
\end{equation}
 In view of our choice $f^{'}(K)>0$ as showed in Condition (i), then \eqref{mo1} and \eqref{mo2} illustrate that
\begin{equation}
\begin{split}
\label{ji}
&
\frac{nK^{2}f^{'}(K)}{H^{n+3}}|H\nabla_{i}h_{kl}-\nabla_{i}Hh_{kl}|^{2}_{g,h} +\frac{1}{H^{n}}\left( Kf^{''}(K)+f^{'}(K)-\frac{f^{'}(K)}{n}\right)|\nabla K|^{2}_{e}\\
&\geq f^{'}(K)\frac{n\varepsilon^{2}}{4 }\frac{K^{2}}{H^{n+3}}|\nabla H|^{2}- modulo \ \nabla \left(  \frac{K}{H^{n}}\right).
\end{split}
\end{equation}
From the choice of $f$, there satisfies $nK f^{'}(K)-f>0$ by Condition (ii), and the identity $H^{2}\leq n |A|^{2}$ holds for any hypersurfaces with the aid of the Cauchy-Schwartz inequality, then we have
\begin{equation}\label{jii}
(-f+nKf^{'}(K))\left(|A|^{2}-\frac{1}{n}|H|^{2}\right)> 0.
\end{equation}
Now, we substitute \eqref{ji} and \eqref{jii} into \eqref{para}, and apply the parabolic maximum principle on \eqref{para}, we obtain the desired result.

\section{Convergence of a sphere}
\label{Sec5 }

In this section, we want to show that the shape of $M_{t}$ approaches that of a round sphere as $t\rightarrow T$. To realize this, define the auxiliary functions
\begin{equation}\label{gd}
g=\frac{1}{n^{n}}-\frac{K}{H^{n}},\quad g_{\sigma}=\phi(H)g,
\end{equation}
where $\phi(H)=H^{\sigma}$ with $\sigma>0$. Notice that $0\leq g\leq 1/n^{n}$ and $g=0$ holds at umbilic points.
\begin{theo}\label{conv} Suppose the initial hypersurface $M_{0}$ satisfy the pinching condition \eqref{conv}, and $g_{\sigma}$ is showed in \eqref{gd}, then there exists $\sigma>0$ such that
\begin{equation}\label{gxt}
g_{\sigma}(x,t)\leq \max g_{\sigma}(x,0),\quad \forall (x,t)\in \sn \times [0, T).
\end{equation}
\end{theo}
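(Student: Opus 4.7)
The plan is to apply the parabolic maximum principle directly to $g_\sigma = H^\sigma(1/n^n - K/H^n)$: compute $\partial_t g_\sigma = H^\sigma \partial_t g + \sigma H^{\sigma-1} g\,\partial_t H$ from \eqref{KHN} and \eqref{htt}, and verify that for $\sigma>0$ chosen small enough the reaction part is non-positive at any interior spatial maximum. Since the pinching \eqref{pin} is preserved by Proposition \ref{pink}, the conclusions of Lemma \ref{GH} (in particular $h_{ij}\geq \varepsilon H g_{ij}$ for some $\varepsilon>0$) are available throughout the evolution, and $g\geq 0$ with equality exactly at umbilic points.

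First I would rewrite the evolution schematically as $\partial_t g_\sigma = Kf'(K)\Lambda g_\sigma + \langle V,\nabla g_\sigma\rangle + \mathcal{R}_0 + \mathcal{R}_1$, where $\mathcal{R}_0$ collects the reaction terms in $|A|^2 - H^2/n$ and $\mathcal{R}_1$ the gradient-squared remainder. At an interior maximum the first two pieces are non-positive by the first- and second-order conditions, and the constraint $\nabla g_\sigma = 0$ yields $\nabla g = -(\sigma g/H)\nabla H$. This identity lets me express every $\nabla K$ and $\nabla g$ appearing in $\mathcal{R}_1$ as a scalar multiple of $\nabla H$, reducing $\mathcal{R}_1$ to a quadratic form in $\nabla H$ alone.

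For $\mathcal{R}_0$, the favourable term $-\frac{nKH^\sigma}{H^{n+1}}(nKf'-f)(|A|^2 - H^2/n)$ coming from $\partial_t g$ has to dominate the net bad remainder, which after combining $\sigma f'(K)KH^{\sigma+1}g$ with the good $\sigma H^{\sigma-1}g(f-nKf')|A|^2$ via $|A|^2 \geq H^2/n$ is at most $\sigma f H^{\sigma+1}g/n$. The crucial quantitative ingredient is the Huisken-type AM--GM comparison
\[
|A|^2 - \tfrac{1}{n}H^2 \geq c(n,\varepsilon)\, H^2 g,
\]
valid whenever $h_{ij}\geq \varepsilon H g_{ij}$. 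Combined with $K/H^n\geq C(n)$ and $nKf'-f \geq \alpha_1 f$ from Condition (ii), the favourable contribution dominates the bad one whenever $\sigma$ is smaller than an explicit constant depending only on $n$, $\varepsilon$, $C(n)$ and $\alpha_1$.

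For $\mathcal{R}_1$, I would mimic the gradient analysis in the proof of Proposition \ref{pink}: Condition (iii) and Lemma \ref{GH} provide the Hamilton--Huisken-type bound
\[
\frac{nK^2 f'(K)}{H^{n+3}}|H\nabla_i h_{kl}-\nabla_i H\, h_{kl}|^2_{g,h} + \frac{1}{H^n}\Bigl(Kf''(K)+f'(K)(1-\tfrac1n)\Bigr)|\nabla K|^2_e \geq c\varepsilon^2 f'(K)\frac{K^2}{H^{n+3}}|\nabla H|^2
\]
modulo terms linear in $\nabla g$; those residual linear terms are absorbed at the maximum via $\nabla g = -(\sigma g/H)\nabla H$, costing only an $O(\sigma)$ correction to the coefficient of $|\nabla H|^2$. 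The genuinely bad piece $H^{\sigma+n-1}f'(K)|\nabla g|^2$ becomes $\sigma^2 g^2 f'(K) H^{\sigma+n-3}|\nabla H|^2$, of size $O(\sigma^2)$, which for $\sigma$ small is swallowed by the $O(1)$ favourable coefficient (using $H^{2n}/K^2 \leq 1/C(n)^2$ to bound the ratio uniformly in $H$). Taking $\sigma$ below the thresholds from both steps yields $\partial_t g_\sigma \leq 0$ at every interior maximum, and the parabolic maximum principle gives \eqref{gxt}. The main obstacle is precisely this bookkeeping in $\mathcal{R}_1$: the substitution $\nabla g \mapsto -(\sigma g/H)\nabla H$ generates cross terms whose smallness is only $O(\sigma)$, so the pinching is essential to make the absorption into the good $-f'(K) K^2 H^{\sigma-n-3}|\nabla H|^2$ term uniform as $H\to\infty$.
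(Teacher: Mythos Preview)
Your proposal is correct and follows essentially the same strategy as the paper: compute $\partial_t g_\sigma$ from \eqref{KHN} and \eqref{htt}, use the pinching (via Lemma~\ref{GH}) together with Condition~(iii) to control the $|\nabla K|^2_e$ term against the $|H\nabla_i h_{kl}-\nabla_i H h_{kl}|^2_{g,h}$ term exactly as in \eqref{mo1}--\eqref{ji}, then invoke Lemma~\ref{AH} and Condition~(ii) to make the reaction term dominate for small $\sigma$, and finally apply the parabolic maximum principle. The only cosmetic difference is that the paper first derives the full evolution equation (Lemma~\ref{ge}) and substitutes the identities \eqref{11}--\eqref{15} to isolate $\nabla g_\sigma$ explicitly, whereas you work directly at an interior maximum and use $\nabla g=-(\sigma g/H)\nabla H$; the resulting estimates and the choice of $\sigma$ are the same.
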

Before proving Theorem \ref{conv}, we first need the following result proved in \cite[Lemma 5.5]{CW85} or \cite[Lemma 2.5]{S06}, which holds for any convex hypersurface with $h_{ij}\geq \varepsilon H g_{ij}$.
\begin{lem}\label{AH}
Assume that $\lambda_{i}\geq \varepsilon H>0$ for some $\varepsilon >0$ and for all $i=1,\cdots n$, then there exists $\delta>0$ such that
\[
\frac{n|A|^{2}-H^{2}}{H^{2}}\geq \delta \left( \frac{1}{n^{n}}-\frac{K}{H^{n}}\right),
\]
where $\lambda_{1},\cdots, \lambda_{n}$ are the principal curvatures of convex hypersurface.

\end{lem}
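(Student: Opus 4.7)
The plan is to reduce the inequality to a purely pointwise algebraic statement on a compact set and conclude by a compactness argument after matching the orders of vanishing at the umbilic point.

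First, I would normalize by setting $\mu_i := \lambda_i/H$, so that $\sum_{i=1}^n \mu_i = 1$ and the hypothesis reads $\mu_i \geq \varepsilon$. The desired inequality then becomes
\[
\Phi(\mu) := n\sum_{i=1}^n \mu_i^2 - 1 \;\geq\; \delta \Big( \frac{1}{n^n} - \prod_{i=1}^n \mu_i \Big) =: \delta\,\Psi(\mu)
\]
on the compact set $\Sigma_\varepsilon := \{\mu \in \R^n : \sum \mu_i = 1,\ \mu_i \geq \varepsilon\}$. One checks that $\Phi \geq 0$ (Cauchy--Schwarz) and $\Psi \geq 0$ (AM--GM), with equality in either case if and only if $\mu = \mu^* := (1/n,\ldots,1/n)$, the umbilic point. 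Since $\varepsilon \leq 1/n$, $\mu^*$ lies in the relative interior of $\Sigma_\varepsilon$.

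The strategy is then to show that $\Phi/\Psi$, which is well defined and continuous on $\Sigma_\varepsilon \setminus \{\mu^*\}$, extends continuously to a positive value at $\mu^*$. Granted this, $\Phi/\Psi$ becomes a continuous positive function on the compact set $\Sigma_\varepsilon$, hence attains a positive minimum which can be chosen as $\delta$. To examine the limit at $\mu^*$, I would write $\eta_i := \mu_i - 1/n$ with $\sum \eta_i = 0$, and compute that $\Phi(\mu) = n\sum_i \eta_i^2$ exactly, while expanding $\prod_i(1 + n\eta_i) = 1 - \tfrac{n^2}{2}\sum \eta_i^2 + O(|\eta|^3)$ (the linear term vanishes since $\sum \eta_i = 0$) yields
\[
\Psi(\mu) = \frac{1}{n^n}\Big(1 - \prod_i(1 + n\eta_i)\Big) = \frac{1}{2\,n^{n-2}}\sum_i \eta_i^2 + O(|\eta|^3).
\]
Therefore $\Phi/\Psi \to 2n^{n-1}$ as $\mu \to \mu^*$, closing the compactness argument.

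The main obstacle is confirming that $\Phi$ and $\Psi$ vanish to exactly the same (quadratic) order at $\mu^*$: if $\Psi$ were to vanish more slowly than $\Phi$, no uniform $\delta$ could be chosen, and the entire scheme would fail. The lower pinching hypothesis $\mu_i \geq \varepsilon$ is used only to ensure that $\Sigma_\varepsilon$ is compact and that $\mu^*$ stays bounded away from its boundary, so that $\delta$ depends on $n$ and $\varepsilon$ alone and not on the specific principal curvatures of the underlying hypersurface.
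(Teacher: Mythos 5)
Your proof is correct and is the standard compactness argument: normalize by $H$, observe that both $n|A|^2/H^2 - 1$ and $\tfrac{1}{n^n} - K/H^n$ are nonnegative and vanish only at the umbilic point $\mu^* = (1/n,\dots,1/n)$, check via Taylor expansion that both vanish to exactly second order there so the quotient has a positive limit $2n^{n-1}$, and then take $\delta$ to be the minimum of the quotient over the compact pinching region $\Sigma_\varepsilon$. The paper itself does not prove this lemma but cites it to Chow \cite[Lemma~5.5]{CW85} and Schulze \cite[Lemma~2.5]{S06}, whose proofs are of precisely this compactness type, so your reconstruction matches the intended argument.
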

Now, we show the evolution equation of $g_{\sigma}$.
\begin{lem}\label{ge}
The function $g_{\sigma}$ satisfies the following equation:
\begin{equation}
\begin{split}
\label{ggg}
\frac{\partial}{\partial t}g_{\sigma}&=Kf^{'}(K)\left[ \Lambda g_{\sigma}-2\phi^{'}\langle \nabla g,\nabla H\rangle_{h}-g\phi^{''}|\nabla H|^{2}_{h} \right] \\
&\quad+\phi f^{'}(K)(1/n-1)\left[ -H^{n}|\nabla g|^{2}_{h}+nH^{-1}K\langle \nabla H, \nabla g\rangle_{h} \right]\\
&\quad +\frac{\phi K f^{'}(K)(n+1)}{H}\Big\langle \nabla H,  \nabla g\Big\rangle_{h} -\phi\frac{nK}{H^{n+1}}(-f+nK f^{'}(K))\left(|A|^{2}-\frac{1}{n}H^{2}\right)\\
&\quad -\phi\frac{1}{H^{n}}\left(Kf^{''}+f^{'}-\frac{f^{'}}{n}\right)|\nabla K|^{2}_{e}\\
&\quad -\phi\frac{nK^{2}f^{'}(K)}{H^{n+3}}|H\nabla_{i}h_{kl}-\nabla_{i}Hh_{kl}|^{2}_{g,h}+\phi H^{n-1}f^{'}(K)\Big|\nabla g\Big|^{2}\\
&\quad  +g\phi^{'}(H)f^{'}K H^{2}+\frac{g\phi^{'}(H)H^{2n}f^{'}(K)}{nK}\Big|\nabla g\Big|^{2}+g\phi^{'}(H)(f-nKf^{'}(K))|A|^{2}\\
&\quad +g\phi^{'}(H)\left(f^{''}+\frac{f^{'}}{K}-\frac{f^{'}}{nK}\right)|\nabla K|^{2}-g\phi^{'}(H)\frac{Kf^{'}(K)}{H^{2}}|H\nabla_{i}h_{kl}-\nabla_{i}Hh_{kl}|^{2}_{g,h}.
\end{split}
\end{equation}
\end{lem}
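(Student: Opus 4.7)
\medskip
\noindent\textbf{Proof proposal for Lemma \ref{ge}.}

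The starting point is the product rule
\[
\frac{\partial}{\partial t}g_{\sigma}
  =\phi'(H)\,g\,\frac{\partial H}{\partial t}
   +\phi(H)\,\frac{\partial g}{\partial t},
\]
together with the identity $\partial_{t}g=-\partial_{t}(K/H^{n})$ coming directly from the definition $g=1/n^{n}-K/H^{n}$. Consequently, the plan is simply to substitute \eqref{htt} for $\partial_{t}H$ and minus \eqref{KHN} for $\partial_{t}g$, then reorganize so that the second-order term appears as $\Lambda g_{\sigma}$ rather than $\phi\Lambda g$.

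The only non-mechanical step is the rewriting of the diffusion term. Using the product rule for the operator $\Lambda=h^{-1}_{kl}\nabla_{k}\nabla_{l}$ together with $\nabla\phi=\phi'(H)\nabla H$, one obtains
\[
\Lambda g_{\sigma}=\phi\,\Lambda g+g\phi'(H)\Lambda H+g\phi''(H)|\nabla H|^{2}_{h}+2\phi'(H)\langle\nabla H,\nabla g\rangle_{h},
\]
so that
\[
\phi\,Kf'(K)\Lambda g
=Kf'(K)\bigl[\Lambda g_{\sigma}-g\phi'(H)\Lambda H-g\phi''(H)|\nabla H|^{2}_{h}-2\phi'(H)\langle\nabla H,\nabla g\rangle_{h}\bigr].
\]
The crucial cancellation is that the stray $-g\phi'(H)Kf'(K)\Lambda H$ produced here is exactly cancelled by the contribution $g\phi'(H)\cdot Kf'(K)\Lambda H$ coming from the $\partial_{t}H$ term via \eqref{htt}. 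This is what permits the second-order part of $\partial_{t}g_{\sigma}$ to collapse into the bracket $Kf'(K)[\Lambda g_{\sigma}-2\phi'\langle\nabla g,\nabla H\rangle_{h}-g\phi''|\nabla H|^{2}_{h}]$ appearing at the top of \eqref{ggg}.

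After that cancellation, the rest of the identity is bookkeeping. The first-order transport terms in \eqref{KHN} (those involving $(1/n-1)\langle\nabla K,\nabla(K/H^{n})\rangle_{h}/K$ and $(n+1)\langle\nabla H,\nabla(K/H^{n})\rangle_{h}/H$) become, after multiplication by $-\phi$ and the substitution $\nabla(K/H^{n})=-\nabla g$, the two first-order brackets on lines two and three of the target formula; here one uses $\nabla K=H^{n}\nabla(K/H^{n})+(nK/H)\nabla H=-H^{n}\nabla g+(nK/H)\nabla H$ to rewrite $\langle\nabla K,\nabla g\rangle_{h}$. The remaining reaction terms from $-\phi\,\partial_{t}(K/H^{n})$ reproduce lines four through six of \eqref{ggg} verbatim, while $g\phi'(H)\,\partial_{t}H$ using \eqref{htt} supplies the last two lines directly.

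The main obstacle will be the first-order transport terms: one has to be careful to use the two ways of decomposing $\nabla K$ consistently so that every $\nabla(K/H^{n})$ gets converted to $-\nabla g$, and so that the coefficients of $\langle\nabla H,\nabla g\rangle_{h}$ and $|\nabla g|^{2}_{h}$ match exactly. Everything else is routine substitution.
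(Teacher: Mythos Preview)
Your proposal is correct and follows essentially the same route as the paper: start from $\partial_{t}g_{\sigma}=-\phi\,\partial_{t}(K/H^{n})+g\phi'(H)\,\partial_{t}H$, insert \eqref{KHN} and \eqref{htt}, use the Leibniz identity for $\Lambda g_{\sigma}$ to absorb the second-order terms (with the $\Lambda H$ cancellation you point out), and then rewrite the transport terms via $\langle\nabla K,\nabla g\rangle_{h}=-H^{n}|\nabla g|^{2}_{h}+nH^{-1}K\langle\nabla H,\nabla g\rangle_{h}$. The paper records exactly these intermediate identities (its equations \eqref{15} and \eqref{17}) before carrying out the same substitution.
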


\begin{proof}
By the definition of $g_{\sigma}$, it is direct to compute
\begin{equation}\label{11}
\nabla g_{\sigma}=\phi \nabla g +g \phi^{'} \nabla H.
\end{equation}
Then
\begin{equation*}\label{}
\nabla K=n K \frac{\nabla H}{H}-\frac{\nabla g_{\sigma}}{\phi}H^{n}+\frac{g\phi^{'}H^{n}}{\phi}\nabla H,
\end{equation*}
and
\begin{equation}\label{12}
\langle \nabla H, \nabla g\rangle=\frac{\langle \nabla g_{\sigma},\nabla H\rangle}{\phi}-g\frac{\phi^{'}}{\phi}|\nabla H|^{2},
\end{equation}
and
\begin{equation}\label{13}
|\nabla g|^{2}=\frac{|\nabla g_{\sigma}|^{2}}{\phi^{2}}-2g\frac{\phi^{'}}{\phi^{2}}\langle \nabla g_{\sigma}, \nabla H\rangle+\frac{g^{2}(\phi^{'})^{2}}{\phi^{2}}|\nabla H|^{2}.
\end{equation}
Since
\begin{equation}\label{14}
\nabla g=n H^{-(n+1)}K \nabla H-\frac{1}{H^{n}}\nabla K.
\end{equation}
This yields
\begin{equation}\label{15}
\langle \nabla K, \nabla g \rangle=- H^{n}\langle \nabla g, \nabla g\rangle+ nH^{-1}K \langle\nabla H,\nabla g\rangle.
\end{equation}
Furthermore,
\begin{equation}\label{16}
\nabla_{ij}g_{\sigma}=-\phi \left(\frac{K}{H^{n}}\right)_{ij}-2\phi^{'}(H)\left(\frac{K}{H^{n}}\right)_{i}H_{j}+g\phi^{''}(H)H_{i}H_{j}+g\phi^{'}H_{ij}.
\end{equation}
This implies that
\begin{equation}\label{17}
\Lambda g_{\sigma}=-\phi \Lambda\left(\frac{K}{H^{n}}\right)-2\phi^{'}\Big\langle \nabla \left(\frac{K}{H^{n}}\right),\nabla H \Big\rangle_{h}+g\phi^{''}(H)|\nabla H|^{2}_{h}+g\phi^{'}\Lambda H.
\end{equation}
We are in a position to have
\begin{equation}
\begin{split}
\label{gtt}
\frac{\partial}{\partial t}g_{\sigma}&=\phi \frac{\partial g}{\partial t}+g\phi^{'}(H)\frac{\partial H}{\partial t}\\
&=-\phi \frac{\partial}{\partial t}\left( \frac{K}{H^{n}}\right)+g\phi^{'}(H)\frac{\partial H}{\partial t}.
\end{split}
\end{equation}
Applying  \eqref{HT} and \eqref{KHN} into \eqref{gtt}, we have
\begin{equation*}
\begin{split}
\label{}
\frac{\partial}{\partial t}g_{\sigma}&= Kf^{'}(K)\left[ -\phi\Lambda \left(\frac{K}{H^{n}}\right)+g\phi^{'}(H)\Lambda H\right]
-\phi f^{'}(K)(1/n-1)\Big\langle\nabla K,\nabla\left(\frac{K}{H^{n}}\right)\Big\rangle_{h}\\
&\quad -\frac{\phi K f^{'}(K)(n+1)}{H}\Big\langle \nabla H,  \nabla \left( \frac{K}{H^{n}}\right)\Big\rangle_{h} -\phi\frac{nK}{H^{n+1}}(-f+nK f^{'}(K))\left(|A|^{2}-\frac{1}{n}H^{2}\right)\\
&\quad -\phi\frac{1}{H^{n}}\left(Kf^{''}+f^{'}-\frac{f^{'}}{n}\right)|\nabla K|^{2}_{e}\\
&\quad -\phi\frac{nK^{2}f^{'}(K)}{H^{n+3}}|H\nabla_{i}h_{kl}-\nabla_{i}Hh_{kl}|^{2}_{g,h}+\phi H^{n-1}f^{'}(K)\Big|\nabla \left(\frac{K}{H^{n}}\right)\Big|^{2}\\
&\quad  +g\phi^{'}(H)f^{'}K H^{2}+\frac{g\phi^{'}(H)H^{2n}f^{'}(K)}{nK}\Big|\nabla\left( \frac{K}{H^{n}}\right)\Big|^{2}+g\phi^{'}(H)(f-nKf^{'}(K))|A|^{2}\\
&\quad +g\phi^{'}(H)\left(f^{''}+\frac{f^{'}}{K}-\frac{f^{'}}{nK}\right)|\nabla K|^{2}-g\phi^{'}(H)\frac{Kf^{'}(K)}{H^{2}}|H\nabla_{i}h_{kl}-\nabla_{i}Hh_{kl}|^{2}_{g,h}\\
&=Kf^{'}(K)\left[ \Lambda g_{\sigma}-2\phi^{'}\langle \nabla g,\nabla H\rangle_{h}-g\phi^{''}|\nabla H|^{2}_{h} \right] \\
&\quad+\phi f^{'}(K)(1/n-1)\left[ -H^{n}|\nabla g|^{2}_{h}+nH^{-1}K\langle \nabla H, \nabla g\rangle_{h} \right]\\
&\quad +\frac{\phi K f^{'}(K)(n+1)}{H}\Big\langle \nabla H,  \nabla g\Big\rangle_{h} -\phi\frac{nK}{H^{n+1}}(-f+nK f^{'}(K))\left(|A|^{2}-\frac{1}{n}H^{2}\right)\\
&\quad -\phi\frac{1}{H^{n}}\left(Kf^{''}+f^{'}-\frac{f^{'}}{n}\right)|\nabla K|^{2}_{e}\\
&\quad -\phi\frac{nK^{2}f^{'}(K)}{H^{n+3}}|H\nabla_{i}h_{kl}-\nabla_{i}Hh_{kl}|^{2}_{g,h}+\phi H^{n-1}f^{'}(K)\Big|\nabla g\Big|^{2}\\
&\quad  +g\phi^{'}(H)f^{'}K H^{2}+\frac{g\phi^{'}(H)H^{2n}f^{'}(K)}{nK}\Big|\nabla g\Big|^{2}+g\phi^{'}(H)(f-nKf^{'}(K))|A|^{2}\\
&\quad +g\phi^{'}(H)\left(f^{''}+\frac{f^{'}}{K}-\frac{f^{'}}{nK}\right)|\nabla K|^{2}-g\phi^{'}(H)\frac{Kf^{'}(K)}{H^{2}}|H\nabla_{i}h_{kl}-\nabla_{i}Hh_{kl}|^{2}_{g,h}.
\end{split}
\end{equation*}
So, the proof is completed.
\end{proof}
{\bf Proof of Theorem \ref{gxt}:}
We first estimate \eqref{ggg} by using the maximum principal. By means of \eqref{pin}, \eqref{mo1}, \eqref{11}-\eqref{16} into \eqref{ggg}, for $\sigma$ sufficiently small, we get
\begin{equation}
\begin{split}
\label{pp2}
\frac{\partial}{\partial t}g_{\sigma}&\leq Kf^{'}(K)\left[ \Lambda g_{\sigma}-2\phi^{'}\langle \nabla g,\nabla H\rangle_{h}-g\phi^{''}|\nabla H|^{2}_{h} \right] \\
&\quad -\phi f^{'}(K)(1/n-1)H^{n}|\nabla g|^{2}_{h}+2H^{-1}\phi K f^{'}(K)\langle \nabla H, \nabla g\rangle_{h}\\
&\quad +\phi H^{n-1} f^{'}(K)|\nabla g|^{2}-\phi\frac{nK}{H^{n+1}}(-f+nKf^{'}(K))\left( |A|^{2}-\frac{1}{n}H^{2}\right)\\
&\quad -\frac{3\phi C_{1}K f^{'}(K)}{H^{3}}|\nabla H|^{2}+\frac{C_{3}f^{'}(K)\left(\frac{K f^{''}(K)}{f^{'}(K)}+1-\frac{1}{n}\right)}{H^{n+2}}\phi \left(|A|^{2}-\frac{1}{n}H^{2}\right)^{1/2}|\nabla K|^{2}\\
&\quad +g\phi^{'}(H)Kf^{'}(K)\frac{1}{n}\frac{H^{2n}}{K^{2}}\Big|\nabla g\Big|^{2}-g\phi^{'}(H)\frac{Kf^{'}(K)}{H^{2}}|H\nabla_{i}h_{kl}-\nabla_{i}Hh_{kl}|^{2}_{g,h}\\
&\quad +g\phi^{'}(H)(f-nK f^{'}(K))|A|^{2}+ g\phi^{'}(H)f^{'}(K)K H^{2},
\end{split}
\end{equation}
where $C_{1}>0$ and $C_{3}< \infty$ are constants depending only on $M_{0}$.

In view of the choice of $f$, $0<\frac{K f^{''}(K)}{f^{'}(K)}+1-\frac{1}{n}\leq \beta$ showed in Condition (iii), there is
\begin{equation}
\begin{split}
\label{C13}
\frac{C_{3}f^{'}(K)\left(\frac{K f^{''}(K)}{f^{'}(K)}+1-\frac{1}{n}\right)}{H^{n+2}}\phi \left(|A|^{2}-\frac{1}{n}H^{2}\right)^{1/2}|\nabla K|^{2}\leq C_{1}f^{'}(K)K\frac{\phi|\nabla H|^{2}}{H^{3}}.
\end{split}
\end{equation}
Using again  \eqref{pin}, \eqref{mo1}, \eqref{11}-\eqref{16}, \eqref{C13} into \eqref{pp2}, we have
\begin{equation}
\begin{split}
\label{pp3}
\frac{\partial}{\partial t}g_{\sigma}&\leq f^{'}K \Lambda g_{\sigma}-2Kf^{'}\phi^{'}\left[\frac{\langle \nabla g_{\sigma},\nabla H\rangle_{h}}{\phi}- g\frac{\phi^{'}}{\phi}|\nabla H|^{2}_{h}\right]-f^{'}Kg\phi^{''}|\nabla H|^{2}_{h}\\
&\quad -\phi f^{'}(K)(1/n-1)H^{n}\left[\frac{|\nabla g_{\sigma}|^{2}_{h}}{\phi^{2}}-2g\frac{\phi^{'}}{\phi^{2}}\langle \nabla g_{\sigma},\nabla H\rangle_{h}+\frac{g^{2}(\phi^{'})^{2}}{\phi^{2}} |\nabla H|^{2}_{h}\right]\\
&\quad +2 H^{-1}\phi K f^{'}(K)\left[ \frac{\langle \nabla g_{\sigma},\nabla H\rangle_{h}}{\phi}-\frac{g\phi^{'}}{\phi} |\nabla H|^{2}_{h}\right]-2C_{1}f^{'}(K)K\frac{\phi|\nabla H|^{2}}{H^{3}}\\
&\quad +\phi H^{n-1} f^{'}(K)\left[ \frac{|\nabla g_{\sigma}|^{2}}{\phi^{2}}-2 g\frac{\phi^{'}}{\phi^{2}}\langle \nabla g_{\sigma},\nabla H\rangle+\frac{g^{2}(\phi^{'})^{2}}{\phi^{2}}|\nabla H|^{2} \right]\\
&\quad -\phi\frac{nK}{H^{n+1}}(-f+nKf^{'}(K))\left( |A|^{2}-\frac{1}{n}H^{2}\right)\\
&\quad +g\phi^{'}(H)(f-nK f^{'}(K))|A|^{2}+ g\phi^{'}(H)f^{'}(K)K H^{2}\\
&=f^{'}K \Lambda g_{\sigma}-2K f^{'}(K)\frac{\phi^{'}}{\phi}\langle \nabla g_{\sigma},\nabla H\rangle_{h}-f^{'}(K)(1/n-1)H^{n}\frac{1}{\phi}|\nabla g_{\sigma}|^{2}_{h}\\
&\quad+2gf^{'}(K)(1/n-1)H^{n}\frac{\phi^{'}}{\phi}\langle \nabla g_{\sigma}, \nabla H\rangle_{h}+2H^{-1} K f^{'}(K)\langle \nabla g_{\sigma}, \nabla H\rangle_{h}\\
&\quad +H^{n-1}f^{'}(K)\frac{1}{\phi}|\nabla g_{\sigma}|^{2}-2gH^{n-1}f^{'}(K)\frac{\phi^{'}}{\phi}\langle \nabla g_{\sigma}, \nabla H\rangle\\
&\quad+2gKf^{'}\frac{(\phi^{'})^{2}}{\phi}|\nabla H|^{2}_{h}-gKf^{'}(K)\phi^{''}|\nabla H|^{2}_{h}-g^{2}f^{'}(K)H^{n}\frac{(\phi^{'})^{2}}{\phi}(1/n-1)|\nabla H|^{2}_{h}\\
&\quad -2gH^{-1}K f^{'}(K)\phi^{'}|\nabla H|^{2}_{h}+g^{2}H^{n-1}f^{'}(K)\frac{(\phi^{'})^{2}}{\phi}|\nabla H|^{2}-2C_{1}f^{'}(K)K\frac{\phi |\nabla H|^{2}}{H^{3}}\\
&\quad -\phi\frac{nK}{H^{n+1}}(-f+nKf^{'}(K))\left( |A|^{2}-\frac{1}{n}H^{2}\right)\\
&\quad +g\phi^{'}(H)(f-nK f^{'}(K))|A|^{2}+ g\phi^{'}(H)f^{'}(K)K H^{2}.\\
\end{split}
\end{equation}
For better reading, we are in a position to take $\phi (H)=H^{\sigma}$ into \eqref{pp3}, one see
\begin{equation*}
\begin{split}
\label{Up3}
\frac{\partial}{\partial t}g_{\sigma}
&\leq f^{'}K \Lambda g_{\sigma}-2K f^{'}(K)\sigma H^{-1}\langle \nabla g_{\sigma},\nabla H\rangle_{h}-f^{'}(K)(1/n-1)H^{n-\sigma}|\nabla g_{\sigma}|^{2}_{h}\\
&\quad+2\sigma gf^{'}(K)(1/n-1)H^{n-1}\langle \nabla g_{\sigma}, \nabla H\rangle_{h}+2H^{-1} K f^{'}(K)\langle \nabla g_{\sigma}, \nabla H\rangle_{h}\\
&\quad +H^{n-1-\sigma}f^{'}(K)|\nabla g_{\sigma}|^{2}-2\sigma gH^{n-2}f^{'}(K)\langle \nabla g_{\sigma}, \nabla H\rangle\\
&\quad+(\sigma^{2}-\sigma)gKf^{'}(K)H^{\sigma-2}|\nabla H|^{2}_{h}-g^{2}\sigma^{2}f^{'}(K)H^{n} H^{\sigma-2}(1/n-1)|\nabla H|^{2}_{h}\\
&\quad +\sigma^{2}g^{2}H^{n-1}f^{'}(K) H^{\sigma-2}|\nabla H|^{2}-2C_{1}f^{'}(K)K\frac{H^{\sigma} |\nabla H|^{2}}{H^{3}}\\
&\quad -H^{\sigma}\frac{nK}{H^{n+1}}(-f+nKf^{'}(K))\left( |A|^{2}-\frac{1}{n}H^{2}\right)\\
&\quad +\sigma g H^{\sigma-1}f^{'}(K)K(H^{2}-n |A|^{2})+\sigma g H^{\sigma-1}f |A|^{2}.
\end{split}
\end{equation*}
By virtue of \eqref{pin} and Lemma \ref{AH}, there is
\begin{equation*}
\begin{split}
\label{Up3}
&H^{\sigma}\frac{nK}{H^{n+1}}(-f+nKf^{'}(K))\left( |A|^{2}-\frac{1}{n}H^{2}\right)\\
&\geq C \delta Hg_{\sigma}(-f+nKf^{'}(K)),
\end{split}
\end{equation*}
this together with $|A|^{2}\leq |H|^{2}$ to embrace
\begin{equation*}
\begin{split}
\label{Up3}
&-H^{\sigma}\frac{nK}{H^{n+1}}(-f(K)+nKf^{'}(K))\left( |A|^{2}-\frac{1}{n}H^{2}\right)+\sigma g H^{\sigma-1}f |A|^{2}\\
&\leq -C \delta Hg_{\sigma}(-f(K)+nKf^{'}(K))+\sigma H g_{\sigma} f(K).
\end{split}
\end{equation*}
In view of the choice of $f$, $\alpha_{1}f(K)\leq nK f^{'}(K)-f(K)$ showed in Condition (ii), this together with corollary \ref{KPR},  for $\sigma$ sufficiently small, we have
\begin{equation}
\begin{split}
\label{Up3}
&-H^{\sigma}\frac{nK}{H^{n+1}}(-f(K)+nKf^{'}(K))\left( |A|^{2}-\frac{1}{n}H^{2}\right)+\sigma g H^{\sigma-1}f(K)|A|^{2}\leq 0.
\end{split}
\end{equation}
This reveals that
\begin{equation}
\begin{split}
\label{FP}
\frac{\partial}{\partial t}g_{\sigma}&
\leq f^{'}K \Lambda g_{\sigma}-2K f^{'}(K)\sigma H^{-1}\langle \nabla g_{\sigma},\nabla H\rangle_{h}-f^{'}(K)(1/n-1)H^{n-\sigma}|\nabla g_{\sigma}|^{2}_{h}\\
&\quad+2gf^{'}(K)(1/n-1)H^{n-1}\langle \nabla g_{\sigma}, \nabla H\rangle_{h}+2H^{-1} K f^{'}(K)\langle \nabla g_{\sigma}, \nabla H\rangle_{h}\\
&\quad +H^{n-1-\sigma}f^{'}(K)|\nabla g_{\sigma}|^{2}-2gH^{n-2}f^{'}(K)\langle \nabla g_{\sigma}, \nabla H\rangle_{h}-C_{1}Kf^{'}(K)H^{\sigma-3}|\nabla H|^{2},
\end{split}
\end{equation}
provided $h_{ij}\geq \varepsilon H g_{ij}$ at $t=0$. Then, applying the parabolic maximum principle to \eqref{FP}, there is
\begin{equation*}\label{}
\frac{\partial}{\partial t}g_{\sigma}\leq 0,
\end{equation*}
which implies the desired result.

Now, based on the previous result, we shall show $C^{\infty}$-convergence to a strictly convex hypersurface $M_{\infty}$ in the spirit of Chou-Wang\cite{CW00}.

We first parametrize $M_{t}$ by the inverse of the Gauss map since $M_{t}$ is strictly convex. This implies that $F:\sn \rightarrow M_{t}$ takes a unit vector $x\in \sn$ to the point $F(x,t)$ on $M_{t}$ having $x$ as its outward normal. The support function $h$ of $M_{t}$ is given by $u(x,t)=\langle F(x,t),x\rangle$. So, \eqref{ca} is equivalent to
\begin{equation}\label{}
\frac{\partial }{\partial t}h(x,t)=-f(K(x,t)),\quad x\in \sn.
\end{equation}
In the subsequence, we recall some facts. Let $e_{ij}$ be the standard metric of the sphere $\sn$. Differentiating  $u(x,t)=\langle F(x,t),x\rangle$, we get
\[
\nabla_{i}u=\langle \nabla_{i}x, F(x,t)\rangle+\langle x, \nabla_{i} F(x,t)\rangle.
\]
Since $\nabla_{i}F(x,t)$ is tangent to $M_{t}$ at $F(x,t)$, one see
\[
\nabla_{i}u=\langle \nabla_{i}x, F(x,t)\rangle.
\]
This implies
\begin{equation*}\label{}
F(x,t)=\nabla_{\sn}u+u x.
\end{equation*}
The second fundamenatal form $h_{ij}$ of $M_{t}$ is given by
\begin{equation}\label{uI}
h_{ij}=\nabla_{ij}u+u e_{ij},
\end{equation}
and the induced metric matrix $g_{ij}$ of $M_{t}$ can be obtained by Weingarten's formula,
\begin{equation}\label{gI}
e_{ij}=\langle \nabla_{i}x, \nabla_{j}x\rangle = h_{ik}h_{lj}g^{kl}.
\end{equation}
The radii of principal curvature are the eigenvalues of the matrix $w_{ij}=h^{ik}g_{jk}$. When considering a smooth local orthonormal frame on $\sn$, using \eqref{uI} and \eqref{gI}, there is
\[
w_{ij}=\nabla_{ij}u+u\delta_{ij}.
\]
The Gauss curvature of $F(x,t)\in M_{t}$ is given by
\[
K(x)=\frac{1}{\det(u_{ij}+u\delta_{ij})}.
\]

In \cite{CW00}, Chou-Wang concerned with a logarithmic version of the Gauss curvature flow for convex hypersurfaces. Let $R(t)$ and $r(t)$ be the outer and inner radii of the hypersurface $\partial \mathcal{N}_{t}$ determined by $h(x,t)$ respectively. Setting
\[
R_{0}=\sup \{R(t):t\in (0,T)\}
\]
and
\[
r_{0}=\inf \{r(t):t\in (0,T)\}.
\]
They have estimated the principal of curvature of $\partial \mathcal{N}_{t}$ from both side in terms of $r^{-1}_{0}, R_{0}$ and initial data as follows.
\begin{lem}\cite[Lemm 2.2]{CW00}\label{cw}
Let $r(t)$ and $R(t)$ be the inner and outer radii of a strictly convex hypersurface $\mathcal{N}_{t}$ respectively. There exists a dimensional constant $C$ such that
\[
\frac{R^{2}(t)}{r(t)}\leq C\max_{y\in \partial \mathcal{N}_{t}}\chi_{y; \mathcal{N}_{t}},
\]
where $\chi_{y;\partial \mathcal{N}_{t}}$ is the maximal principal radius of $\mathcal{N}_{t}$ at the point $y$.
\end{lem}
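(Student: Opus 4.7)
The plan is to prove this classical convex-geometric inequality via the support function, reducing to a planar estimate. After translating so that the incenter of the body enclosed by $\mathcal{N}_t$ lies at the origin, the support function $u$ of $\mathcal{N}_t$ on $\mathbb{S}^n$ satisfies $u \geq r$ (with equality where the inscribed ball $B_r(0)$ is tangent to $\partial \mathcal{N}_t$), while $\max u \geq R$ because the circumradius condition forces the existence of a boundary point $p \in \partial \mathcal{N}_t$ with $|p|\geq R$ (otherwise $B_R(0)$ would give a smaller enclosing ball). Recall that the principal radii of curvature of $\mathcal{N}_t$ at the boundary point with outer normal $\nu$ are the eigenvalues of the tensor $u_{ij}+u\,\delta_{ij}$ on $\mathbb{S}^n$, so one seeks a direction $\nu$ where $u(\nu)+u_{\tau\tau}(\nu) \geq cR^2/r$ for some tangent direction $\tau$.

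The key step is a reduction to two dimensions. Choose a 2-plane $\Pi$ through the origin containing both the far-point direction $\hat p := p/|p|$ and a direction $\hat w$ along which $B_r(0)$ is tangent to $\partial \mathcal{N}_t$. The planar body $K := \mathcal{N}_t \cap \Pi$ has $2$-dimensional inradius $r$, contains a boundary point at distance at least $R$ from the incenter, and its boundary touches the $2$-disk $B_r(0)\cap\Pi$ at the point corresponding to $\hat w$. In this planar setting, locate the boundary point $y^* = (x_{y^*},y_{y^*})$ whose outer normal is perpendicular to $\hat p$; approximating $\partial K$ near $y^*$ by its osculating circle and enforcing the convex passage through $p=(|p|,0)$ yields
\[
\rho_{y^*} \;\geq\; \frac{(|p|-x_{y^*})^2}{2\,y_{y^*}}.
\]
The "thin" choice of $\Pi$ guarantees $y_{y^*} \leq C' r$, and a routine convexity check in the relevant regime shows $x_{y^*} \leq |p|/2$, giving $\rho_{y^*} \geq cR^2/r$. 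Meusnier's theorem (identifying planar radii of curvature of normal sections with principal radii) then lifts this to a lower bound on a principal radius of $\mathcal{N}_t$ at the corresponding point, so $\max_{y \in \partial \mathcal{N}_t} \chi_{y;\mathcal{N}_t} \geq cR^2/r$.

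The main obstacle is ensuring that a suitable "thin" plane $\Pi$ exists and delivers the bound $y_{y^*}\leq C'r$. In degenerate cases where the inscribed-ball tangency direction is nearly parallel to $\hat p$, a small-angle perturbation of $\Pi$ is required; when $\mathcal{N}_t$ is close to round, one has $R \sim r$ and the inequality becomes trivial, so a dichotomy based on the ratio $R/r$ handles the edge cases. The cleanest execution of this geometric comparison — and the extraction of the uniform dimensional constant $C$ — is the argument given in \cite{CW00}, which we cite without reproducing the detailed case analysis.
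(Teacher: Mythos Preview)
The paper gives no proof of this lemma at all: it is simply quoted from Chou--Wang \cite[Lemma~2.2]{CW00} and used as a black box. There is therefore nothing in the paper to compare your argument against; a bare citation would already match the paper's treatment.

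That said, your sketch is in the right spirit---the support-function setup, the reduction to a two-dimensional section, and the osculating-circle comparison are the standard ingredients of the Chou--Wang argument---and the Meusnier step goes in the correct direction (the radius of curvature of a planar section is at most the maximal principal radius at that point, so a large planar radius forces a large principal radius). Your proposal is, however, explicitly incomplete: you acknowledge that the choice of the plane $\Pi$ and the bound $y_{y^*}\leq C'r$ require a case analysis you do not carry out, and you ultimately defer to \cite{CW00} for the details. Since the paper itself does exactly that (cite without proof), this is not a defect relative to the paper, but if your goal were a self-contained argument you would still need to supply that case analysis.
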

Motivated by Chou-Wang's work,  we shall give an upper estimate for the principal radii of curvature. Here we have assumed that the origin is translated so that $u>0$ on $\sn$.
\begin{lem}\label{PL*}
  Under the flow \eqref{ca}.  Then there exists a positive constant $C$, independent of $t$, such that
\begin{equation}\label{PUL}
\chi (\{w_{ij}\})\leq C(1+R_{0}),
\end{equation}
where $w_{ij}=u_{ij}+u\delta_{ij}$, and $\lambda (\{w_{ij}\})$ are the eigenvalues of $\{w_{ij}\}$.
\end{lem}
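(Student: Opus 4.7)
The plan is to apply the parabolic maximum principle to an auxiliary function combining $\chi$ and the support function $u$, following the approach of Chou--Wang \cite{CW00}. In the support function parametrization the flow \eqref{ca} reads $\partial_t u = -f(K)$, with $K = 1/\det w$ and $w_{ij} = u_{ij} + u\,\delta_{ij}$ on $\sn$. Differentiating the flow equation twice in the spatial variables (covariant derivatives on the fixed sphere commute with $\partial_t$) yields
\begin{equation*}
\partial_t w_{ij} = -f'(K)\,K_{ij} - f''(K)\,K_i K_j - f(K)\,\delta_{ij}.
\end{equation*}
Writing $K_i, K_{ij}$ through $K = 1/\det w$ and commuting covariant derivatives on the sphere (which produces curvature terms of the form $w_{11} - w_{pp}$) gives a quasilinear parabolic equation for $w_{ij}$ with principal part $f'(K)\,K\,w^{kl}\nabla_k\nabla_l$. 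Condition 1.1(i) ensures parabolicity, and Condition 1.1(v) provides the correct concavity of the speed in $w_{ij}$ that the maximum principle step will require.

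After translating the origin so that $u > 0$ on $\sn$, I would apply the maximum principle to an auxiliary function of the form
\begin{equation*}
W(x,t) = \chi(x,t) - A\,u(x,t),
\end{equation*}
with $A>0$ to be chosen large. Since $u \leq R_0$, showing $W \leq C_0$ for $C_0$ independent of $t$ gives $\chi \leq C_0 + A R_0 \leq C(1+R_0)$. Let $W$ attain its space--time maximum at $(x_0,t_0)$, and pick a local orthonormal frame diagonalizing $w$ at $(x_0,t_0)$ so that $\chi = w_{11}$ there. At this point the spatial gradient relation $\nabla w_{11} = A\nabla u$ holds, the spatial Hessian of $w_{11} - Au$ is nonpositive, and $\partial_t(w_{11} - Au)\geq 0$. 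Inserting the evolution of $w_{11}$ and using $\partial_t u = -f(K)$ produces an algebraic inequality at $(x_0,t_0)$ in which $\nabla w_{11}$ is eliminated in favor of $A\nabla u$.

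To close the argument, one exploits the pinching preserved by Proposition \ref{pink}: Lemma \ref{GH} supplies $\varepsilon>0$ with $h_{ij}\geq \varepsilon H g_{ij}$, which translates via $w = h^{-1}$ into the comparability $w_{11}\leq \varepsilon^{-1} w_{ii}$ for every $i$. Combined with Conditions 1.1(ii)--(iv), the concave quadratic gradient terms coming from $K_{11}$ are nonpositive, the commutator contribution $f'(K)K\sum_{p\neq 1}(w_{11}-w_{pp})/w_{pp}$ has a favorable sign since $w_{11}\geq w_{pp}$ at the maximum, and the zeroth-order term $-f(K)\,\delta_{ii}$ yields an absolute sign. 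Choosing $A$ so that $A\,f(K(x_0,t_0))$ dominates the residual contributions forces $w_{11}(x_0,t_0)\leq C_0$ for some $C_0$ depending only on the pinching constants and $M_0$, whence $W\leq C_0$ globally and $\chi \leq C(1+R_0)$ follows.

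The main obstacle is the non-homogeneous nature of $f$: arguments tailored to degree-one speeds (Andrews \cite{An94}) or $\alpha$-powers of $K$ (Chow \cite{CW85}) exploit scaling to collapse multiple terms, whereas here each of the quantities $f'(K)$, $Kf'(K)$, $Kf''(K)$, and $K/f(K)^\gamma$ must be bounded independently, which is precisely the purpose of Conditions 1.1(ii)--(iv). A secondary technicality is the non-smoothness of $\chi$ at points where the largest eigenvalue of $w$ has multiplicity; this is handled by the standard device of replacing $\chi$ by $\eta^{ij}w_{ij}$ for a smooth symmetric $(2,0)$-tensor $\eta$ that equals the maximizing eigenprojection at $(x_0,t_0)$.
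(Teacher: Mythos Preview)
Your overall strategy—maximum principle on an auxiliary function built from $\chi$ and the support function—is the right one, but your specific choice $W=\chi-Au$ does not match the paper's and, more importantly, does not close the way you describe. The paper takes
\[
\psi=\chi_{\max}(\{w_{ij}\})+|\nabla_{\sn}u|^{2}+u^{2}.
\]
At a space--time maximum (with $w$ diagonalized and $\chi_{\max}=w_{11}$), the $|\nabla_{\sn}u|^{2}$ piece contributes, through $\nabla_{ii}|\nabla u|^{2}\supset 2u_{ii}^{2}$, the term
\[
-2\sum_i\frac{\partial f}{\partial w_{ii}}u_{ii}^{2}=2f'(K)K^{2}\sum_i\sigma_n^{ii}(w_{ii}-u)^{2},
\]
and in particular $2f'(K)K^{2}\sigma_n^{11}w_{11}^{2}$. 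The Newton--MacLaurin inequality gives $\sigma_n^{11}w_{11}^{2}\geq C\sigma_n w_{11}$, turning this into a term \emph{linear in $w_{11}$} with favorable sign. Balancing it against the remaining zeroth-order terms (bounded by $Cuf'(K)K$ using $Kf'(K)\leq f(K)$ from Condition~1.1(ii)) yields $w_{11}\leq C u\leq C R_0$ directly. Pinching is never used; neither are Conditions~1.1(iii)--(iv).

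Your function $W=\chi-Au$ lacks the $|\nabla u|^{2}$ term, so no $w_{11}^{2}$-type good contribution appears. The terms you call ``favorable''—the commutator $-f'(K)K\sum_{p}(w_{11}-w_{pp})/w_{pp}$ and the $-f(K)$ from $\partial_t u$—are indeed nonpositive in $(\partial_t-L)w_{11}$, but by themselves they only yield $(\partial_t-L)\chi\leq 0$, hence $\chi(t)\leq\chi(0)$, a bound in terms of initial data rather than $R_0$. Adding $-Au$ and ``choosing $A$ large'' does not help: if you write out $0\leq(\partial_t-L)W$ at the maximum and use Condition~1.1(ii), the constraint you obtain is of the form $(w_{11}+Au)H\leq C_n A$, which bounds $w_{11}H$ from above but does not bound $w_{11}$ without a lower bound on $H$—and that is equivalent to the estimate you are after. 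Invoking pinching does not rescue this step either: the comparability $w_{11}\leq \varepsilon^{-1}w_{ii}$ only gives $w_{11}H\leq n/\varepsilon$, which is information-free here. To fix the argument, replace $-Au$ by $+|\nabla_{\sn}u|^{2}+u^{2}$ as in the paper; the convexity hypothesis~1.1(v) then disposes of the Hessian-of-$f$ term, and the Newton--MacLaurin step closes the estimate.
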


\begin{proof}
To obtain \eqref{PUL}, we are in a position to set the auxiliary function as
\begin{equation}\label{LFun}
\psi(x,t)=\chi_{max}(\{w_{ij}\})+|\nabla_{\sn} u|^{2}+u^{2}.
\end{equation}
Here $\lambda_{max}(\{w_{ij}\})$ is the maximal eigenvalue of $\{w_{ij}\}$.

Now we shall assume that the maximum of $\psi(x,t)$ is achieved at $(x_{0},t_{0})$ on $\sn\times (0,T)$. By virtue of a rotation of coordinates, we may suppose that $\{w_{ij}(x_{0},t_{0})\}$ is diagonal, and $\chi_{max}(\{w_{ij}\})(x_{0},t_{0})=w_{11}(x_{0},t_{0})$. So, $\eqref{LFun}$ becomes the following form:
\begin{equation}\label{LFun2}
\widetilde{\psi}(x,t)=w_{11}+|\nabla_{\sn} u|^{2}+u^{2}.
\end{equation}
Since  $\widetilde{\psi}(x,t)$ attains the maximum at $(x_{0},t_{0})$, it follows that, at $(x_{0},t_{0})$, there is
\begin{equation}
\begin{split}
\label{gaslw1}
0=\nabla_{i}\widetilde{\psi}&=\nabla_{i}w_{11}+2\sum_{j} u_{j}u_{ji}+2uu_{i}\\
&=(u_{i11}+u_{1}\delta_{1i})+2u_{i}u_{ii}+2uu_{i},
\end{split}
\end{equation}
and we also have
\begin{align}\label{gaslw2}
0&\geq \nabla_{ii}\widetilde{\psi}=\nabla_{ii}w_{11}
+2\left(\sum_{j} u_{j}u_{jii}+u^{2}_{ii}\right)+2u^{2}_{i}+2uu_{ii}.
\end{align}
Furthermore,
\begin{equation}
\label{gaslw3}
0\leq\partial_{t}\widetilde{\psi}=\partial_{t}w_{11}+2\sum_{j} u_{j}u_{jt}+2uu_{t}=(u_{11t}+u_{t})+2\sum_{j} u_{j}u_{jt}+2uu_{t}.
\end{equation}
Due to
\begin{equation}\label{gaslw4}
u_{t}=-f(K).
\end{equation}
Taking the covariant derivative of \eqref{gaslw4} with respect to $e_{j}$, yields
\begin{equation}
\begin{split}
\label{gaslw6}
u_{tj}&=f^{'}(K)K^{2}\sum_{i,k} \sigma_{n}^{ik}\nabla_{j}w_{ik}\\
&=f^{'}(K)K^{2}\sum_{i} \sigma_{n}^{ii}(u_{jii}+u_{i}\delta_{ij}),
\end{split}
\end{equation}
where $\sigma_{n}$ is the $n$-th symmetric function for the radii of principal curvature, i.e., $\sigma_{n}=K^{-1}$, and $\sigma^{ik}_{n}=\frac{\partial \sigma_{n}}{\partial w_{ik}}$. Differentiating \eqref{gaslw6}, we have
\begin{equation}
\begin{split}
\label{gaslw7}
&u_{11t}=f^{'}(K)K^{2}\sum_{i,k}\sigma^{ik}_{n}\nabla_{11}w_{ik}-\sum_{i,k,\beta,s}\frac{\partial^{2}f(K)}{\partial w_{ik}\partial w_{\beta s}}\nabla_{1}w_{ik}\nabla_{1}w_{\beta s}\\
&=-\sum_{i}\frac{\partial f(K)}{\partial W_{ii}}\nabla_{11}w_{ii}-\sum_{i,k,\beta,s}\frac{\partial^{2}f(K)}{\partial w_{ik}\partial w_{\beta s}}\nabla_{1}w_{ik}\nabla_{1}w_{\beta s},
\end{split}
\end{equation}
where $\sigma_{n}^{ik,\beta s}=\frac{\partial^{2}\sigma_{n}}{\partial w_{ik}\partial w_{\beta s}}$. Recall that the Ricci identity on sphere reads
\begin{equation*}
\nabla_{11}w_{ij}=\nabla_{ij}w_{11}-\delta_{ij}w_{11}+\delta_{11}w_{ij}-\delta_{1i}w_{1j}+\delta_{1j}w_{1i}.
\end{equation*}
So, with the aid of the Ricci identity, \eqref{gaslw2}, \eqref{gaslw3}, \eqref{gaslw6}, \eqref{gaslw7} and the convexity of $f(K)$, we obtain
\begin{equation}
\begin{split}
\label{gaslw8}
0&\geq -\sum_{i}\frac{\partial f(K)}{\partial w_{ii}} \nabla_{ii}\widetilde{\psi}-\partial_{t}\widetilde{\psi}\\
&=-\sum_{i}\frac{\partial f(K)}{\partial w_{ii}}\nabla_{ii}w_{11}-2\sum_{i}\frac{\partial f(K)}{\partial w_{ii}}\left(\sum_{j}u_{j}u_{jii}+u^{2}_{ii}\right)-2\sum_{i}\frac{\partial f(K)}{\partial w_{ii}}u^{2}_{i}-2\sum_{i}\frac{\partial f(K)}{\partial w_{ii}}uu_{ii}\\
&\quad -w_{11t}-2\sum_{j}u_{j}u_{jt}-2uu_{t}\\
&=-\sum_{i}\frac{\partial f(K)}{\partial w_{ii}}(\nabla_{11}w_{ii}+w_{11}-w_{ii})-2\sum_{j}u_{j}(\sum_{i}\frac{\partial f(K)}{\partial w_{ii}}u_{jii}-u_{jt})-2\sum_{i}\frac{\partial f(K)}{\partial w_{ii}}u^{2}_{ii}\\
&\quad -2\sum_{i}\frac{\partial f(K)}{\partial w_{ii}}u^{2}_{i}-2\sum_{i}\frac{\partial f(K)}{\partial w_{ii}}u(w_{ii}-u)-u_{11t}-u_{t}-2uu_{t}\\
&\geq-\sum_{i}\frac{\partial f(K)}{\partial w_{ii}}\nabla_{11}w_{ii}-u_{11t}+2\sum_{j}u_{j}\frac{\partial f(K)}{\partial w_{jj}}u_{j}-2\sum_{i}\frac{\partial f(K)}{\partial w_{ii}}(w_{ii}-u)^{2}\\
&\quad -2\sum_{i}\frac{\partial f(K)}{\partial w_{ii}}u^{2}_{i}-2\sum_{i}\frac{\partial f(K)}{\partial w_{ii}}u w_{ii}+2\sum_{i}\frac{\partial f(K)}{\partial w_{ii}}u^{2}-(2u+1)u_{t}\\
&\geq -2\sum_{i}\frac{\partial f(K)}{\partial w_{ii}}w^{2}_{ii}+2\sum_{i}\frac{\partial f(K)}{\partial w_{ii}}w_{ii}u-(2u+1)u_{t}\\
&=2f^{'}(K)K^{2}\sum_{i}\sigma^{ii}_{n}w^{2}_{ii}-2u f^{'}(K)K^{2} \sum_{i}\sigma^{ii}_{n} w_{ii}+(2u+1)f(K).
\end{split}
\end{equation}
This implies that
\begin{equation}\label{Fmax}
2f^{'}(K)K^{2}\sigma_{n}^{11}w_{11}^{2}\leq 2nuf^{'}(K)K-(2u+1)f(K).
\end{equation}
 Since $\{w_{ij}\}$ is diagonal at $(x_{0},t_{0})$, and $w_{ii}=\chi_{i}$, $\forall i=1,\ldots, n$. Then we obtain
\begin{equation}\label{F11}
\sigma_{n}^{11}=\sigma_{n-1}(\chi|1),
\end{equation}
where $\lambda:=(\chi_{1},\ldots,\chi_{n})$, $\sigma_{n-1}(\chi|1)$ denotes the $(k-1)$-th symmetric functions with $\lambda_{1}=0$. By using Newton-MacLaurin inequality\cite{CW01}, one has
\begin{equation}\label{Mac}
\left[\frac{\sigma_{n-1}(\chi|1)}{C^{n-1}_{n}}\right]^\frac{1}{n-1}\geq \left[\frac{\sigma_{n}(\chi|1)}{C^{n}_{n}}\right]^\frac{1}{n},
\end{equation}
using \eqref{Mac}, for some dimensional positive constants $C$, we get
\begin{equation}
\begin{split}
\label{Fw11}
\sigma_{n}(\chi|1)\leq C \sigma_{n-1}(\chi|1)^\frac{n}{n-1}\leq C\chi_{1}\sigma_{n-1}(\chi|1).
\end{split}
\end{equation}
Substituting \eqref{Fw11} into $\sigma_{n}(\chi)=\sigma_{n}(\chi|1)+\chi_{1}\sigma_{n-1}(\chi|1)$, we have
\begin{equation}\label{1cq}
\chi_{1}\sigma_{n-1}(\chi|1)\geq C \sigma_{n}(\chi).
\end{equation}
So, employing \eqref{1cq}, one see that
\begin{equation}
\begin{split}
\label{maxw11}
\frac{\sigma_{n}^{11}w^{2}_{11}}{\sigma_{n}}&=\frac{\sigma_{n}(\chi|1)\chi^{2}_{1}}{\sigma_{n}}\geq \frac{C\sigma_{n}\chi_{1}}{\sigma_{n}}=Cw_{11}.
\end{split}
\end{equation}
Hence, substituting \eqref{maxw11} into \eqref{Fmax}, there is
\begin{equation}\label{CW11}
2Cf^{'}(K) Kw_{11}\leq 2 nu f^{'}(K) K- (2u+1)f(K).
\end{equation}
In the light of the choice of $f$, $Kf^{'}(K)\leq  f(K)$ as showed in Condition (ii), we get
\begin{equation}
\begin{split}
\label{Up3}
w_{11}&\leq C\left[2  nu - (2u+1)\frac{f(K)}{Kf^{'}(K)}\right]\leq C[(2n-2)u-1]\\
&\leq C[2(n-1)R_{0}-1]\leq \widetilde{C}( R_{0}+1).
\end{split}
\end{equation}
Therefore, the proof is completed.
\end{proof}

Next, we are ready to obtain the upper bound of $K$.

\begin{lem}\label{prin}
  Under the flow \eqref{ca}. Then there exists a positive constant $\bar{C}$, independent of $t$, such that
\[
K\leq C.
\]

\end{lem}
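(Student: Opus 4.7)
The plan is to run a Tso-style maximum principle argument on the auxiliary function
\begin{equation*}
W(x,t)=\frac{f(K(x,t))}{u(x,t)-u_0},
\end{equation*}
working in the support-function parametrization already set up just before Lemma \ref{PL*}. In that parametrization the flow becomes the scalar parabolic equation $u_t=-f(K)$ on $\sn$, with $K=[\det w]^{-1}$ and $w_{ij}=u_{ij}+u\delta_{ij}$. The first step is to legitimize the denominator: combining Lemma \ref{PL*} ($\chi_{\max}\leq C(1+R_0)$) with the Chou--Wang estimate in Lemma \ref{cw} ($R^2/r\leq C\chi_{\max}$) gives a uniform positive lower bound for the inner radius $r(t)$, so after translating the origin into the inball one has $u\geq r\geq 2u_0$ for some $u_0>0$, and $u-u_0\geq u_0$ throughout the existence interval.

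Now write $F(w):=f(1/\det w)$ and $F^{ij}:=\partial F/\partial w_{ij}$. Differentiating $w_{ij,t}=u_{ijt}+u_t\delta_{ij}=-F_{ij}-F\delta_{ij}$ in $t$ yields
\begin{equation*}
F_t=-F^{ij}F_{ij}-F\,F^{ii},
\end{equation*}
so that
\begin{equation*}
W_t=\frac{F_t}{u-u_0}+\frac{F^2}{(u-u_0)^2}.
\end{equation*}
At an interior space-time maximum $(x_1,t_1)$ of $W$, the critical-point conditions $\nabla W=0$ and $W_{ij}\leq 0$ read $\nabla F=W\nabla u$ and contract against the positive operator $(F^{ij})$ (positivity comes from $f'>0$ and $w>0$). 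Using the first-order relation to eliminate the cross term $F^{ij}F_iu_j$, this gives
\begin{equation*}
F^{ij}F_{ij}\leq \frac{F}{u-u_0}\bigl(F^{ij}w_{ij}-u\,F^{ii}\bigr).
\end{equation*}
Inserting this into $W_t\geq 0$ and rearranging yields an algebraic inequality of the form $W(u-u_0)\leq F^{ij}w_{ij}+\text{lower order}$ at the maximum. Since $F^{ij}w_{ij}$ is (up to sign) a multiple of $nKf'(K)-f(K)$, which by Condition (ii) is controlled linearly by $f(K)$, this closes to an upper bound $W\leq C$ depending only on $R_0$, $u_0$, and the structural constants $\alpha_1,\alpha_2,\beta,\gamma,\hat\gamma$.

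From $W\leq C$ I deduce $f(K)\leq C(u-u_0)\leq CR_0$ on $\sn\times[0,T)$. Condition (ii) rewritten as
\begin{equation*}
(\log f)'(K)\geq \frac{1+\alpha_1}{n\,K}
\end{equation*}
integrates to $f(K)\geq c\,K^{(1+\alpha_1)/n}$ for $K$ large, so the upper bound on $f(K)$ forces $K\leq\bar{C}$, which is the desired conclusion.

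The main obstacle will be the evolution calculation in the non-homogeneous regime. In Tso's original $f(K)=K$ setting, homogeneity produces the clean identity $F^{ij}w_{ij}=-F$ that makes the cancellation at the maximum point transparent; here one must absorb an extra second-order remainder of the form $F^{ij,kl}F_iF_j$, and this is exactly where Condition (v) (convexity of $f$ in the radii $w_{ij}$) enters with the right sign to be discarded by the maximum principle. Condition (ii) then supplies the quantitative bound on $F^{ij}w_{ij}$ that turns the resulting inequality into a genuine upper bound for $W$, while Conditions (i) and (iv) are what allow one to invert $f(K)\leq C$ into $K\leq \bar C$ at the end.
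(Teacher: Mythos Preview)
Your Tso-type auxiliary function $W=f(K)/(u-u_0)$ is exactly the right object, and the paper proceeds the same way (using the Steiner point $\zeta(t)$ in the denominator rather than a fixed $u_0$, which you will in fact need since the shrinking body can drift away from any fixed origin). However, the step where you close the inequality is miscalculated. First, $F^{ij}w_{ij}=-nKf'(K)$, not a multiple of $nKf'-f$; the extra $-f$ you expect from homogeneity simply is not there. Second, and more importantly, there is no second-order remainder $F^{ij,kl}\nabla w\nabla w$ in this computation at all: you only contract the matrix inequality $W_{ij}\le 0$ against $F^{ij}$, and this involves the spatial Hessian $F_{ij}$ as a black box, never its expansion in $\nabla w$. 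So Condition~(v) plays no role in this lemma (it is used in Lemma~\ref{PL*}, not here).

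The actual mechanism that closes the estimate is the term you discard as ``lower order'': after the cancellation $Wu-F=Wu_0$, the trace contribution gives $u_0\,f'(K)K^{2}\sum_i\sigma_n^{ii}$ on the good side, and Newton--MacLaurin yields $\sum_i\sigma_n^{ii}\ge n\sigma_n^{(n-1)/n}=nK^{-(n-1)/n}$, so this term dominates like $u_0\,f'(K)K^{1+1/n}$. Balancing it against $nKf'+f$ (controlled by Condition~(ii)) forces $K^{1/n}\le C/u_0$. The paper packages the same ingredients slightly differently, tracking $\partial_t\Re$ as an ODE and invoking Condition~(iv) to produce a super-quadratic damping term $-C_2\Re^{2+\gamma/n}$; your route, once corrected, actually avoids Condition~(iv) altogether, but as written it does not close because you have thrown away the only term with the right sign and growth.
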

\begin{proof}
Set
\begin{equation}\label{}
\zeta(t)=\frac{1}{\omega_{n}}\int_{\sn}xu(x,t)dx
\end{equation}
be the Steiner point of $\Omega_{t}$. By means of Lemma \ref{cw},  there exists a positive $\varepsilon_{0}$, which depends only on $n$, $r_{0}$ and $R_{0}$, independent of $t$,  such that
\[
u(x,t)-\zeta(t)\cdot x\geq 2\varepsilon_{0}.
\]
Now establishing the auxiliary function as
\begin{equation}\label{AF1}
\Re(x,t)=\frac{f(K)}{u-\zeta(t)\cdot x-\varepsilon_{0}}=\frac{-u_{t}}{u-\zeta(t)\cdot x-\varepsilon_{0}}.
\end{equation}
For any fixed $t\in(0,+\infty)$, suppose that the (positive) maximum of $Q(x,t)$ is achieved at $x_{0}$. Thus, we get that at $x_{0}$,
\begin{equation}\label{Up1}
0=\nabla_{i}\Re=\frac{-u_{ti}}{u-\zeta_{0}-\varepsilon_{0}}+\frac{u_{t}(u_{i}-\zeta_{i})}{(u-\zeta_{0}-\varepsilon_{0})^{2}},
\end{equation}
where $\zeta_{i}:=\zeta\cdot e_{i}$, and $\zeta_{0}=\zeta\cdot x_{0}$.

Then, applying \eqref{Up1}, at $x_{0}$, we also have
\begin{equation}
\begin{split}
\label{Up2}
0\geq\nabla_{ij}\Re&=\frac{-u_{tij}}{u-\zeta_{0}-\varepsilon_{0}}+\frac{u_{ti}(u_{j}-\zeta_{j})+u_{tj}(u_{i}-\zeta_{i})+u_{t}(u_{ij}+\zeta_{0}\delta_{ij})}{(u-\zeta_{0}-\varepsilon_{0})^{2}}-\frac{2u_{t}(u_{i}-\zeta_{i})(u_{j}-\zeta_{j})}{(u-\zeta_{0}-\varepsilon_{0})^{3}}\\
&=\frac{-u_{tij}}{u-\zeta_{0}-\varepsilon_{0}}+\frac{u_{t}u_{ij}}{(u-\zeta_{0}-\varepsilon_{0})^{2}}+\frac{u_{t}\zeta_{0}\delta_{ij}}{(u-\zeta_{0}-\varepsilon_{0})^{2}}.
\end{split}
\end{equation}
 \eqref{Up2} implies
\begin{equation*}
\begin{split}
\label{Up3}
-u_{tij}-u_{t}\delta_{ij}&\leq - \frac{u_{t}u_{ij}}{u-\zeta_{0}-\varepsilon_{0}}-\frac{u_{t}\zeta_{0}\delta_{ij}}{(u-\zeta_{0}-\varepsilon_{0})}-u_{t}\delta_{ij}\\
&=\frac{-u_{t}}{u-\zeta_{0}-\varepsilon_{0}}[u_{ij}+(u-\zeta_{0}-\varepsilon_{0})\delta_{ij})]+\Re\zeta_{0}\delta_{ij}\\
&=\Re(w_{ij}-\varepsilon_{0}\delta_{ij}-\zeta_{0}\delta_{ij})+\Re\zeta_{0}\delta_{ij}\\
&=\Re(w_{ij}-\varepsilon_{0}\delta_{ij}).
\end{split}
\end{equation*}
On the other hand,  at $x_{0}$, we have
\begin{equation}
\begin{split}
\label{s2k} \partial_{t}\Re&=\frac{-u_{tt}}{u-\zeta_{0}-\varepsilon_{0}}+\frac{u^{2}_{t}}{(u-\zeta_{0}-\varepsilon_{0})^{2}}-\frac{u_{t}\frac{d\zeta_{0}}{dt}}{(u-\zeta_{0}-\varepsilon_{0})^{2}}\\
&=\frac{1}{u-\zeta_{0}-\varepsilon_{0}}\left(\frac{ \partial f(K)}{\partial t}\right)+\Re^{2}-\frac{u_{t}\frac{d\zeta_{0}}{dt}}{(u-\zeta_{0}-\varepsilon_{0})^{2}}.
\end{split}
\end{equation}
Rotate the axes so that $\{w_{ij}\}$ is diagonal at $x_{0}$, we have
\begin{equation}
\begin{split}
\label{sigmak41}
\frac{\partial f(K)}{\partial t}&=-f^{'}(K)\sigma^{-2}_{n}\sum_{i,j}\frac{\partial \sigma_{n}}{\partial w_{ij}}(u_{tij}+u_{t}\delta_{ij})\\
&\leq f^{'}(K)\sigma^{-2}_{n}\sum_{i,j}\frac{\partial \sigma_{n}}{\partial w_{ij}}(w_{ij}-\varepsilon_{0}\delta_{ij})\Re\\
&=f^{'}(K)\sigma^{-2}_{n}\left(n\sigma_{n}-\varepsilon_{0}\sum_{i} \sigma^{ii}_{n}\right)\Re\\
&\leq f^{'}(K)(n\sigma^{-1}_{n}-C\varepsilon_{0}\sigma^{-1-\frac{1}{n}}_{n})\Re.
\end{split}
\end{equation}
The last inequality we use Newton-MacLaurin inequality $\sum_{i}\sigma^{ii}_{n}\geq C\sigma^{1-\frac{1}{n}}_{n}$, where $C$ is a positive constant depending only on $n$ and $k$. In the light of the choice of $f$, $\frac{1}{n}f(K)< Kf^{'}(K)\leq f(K)$, $K\geq \hat{\gamma} f(K)^{\gamma}$ for $K\gg 1$, showed in Conditions (ii) and (iii). This together with \eqref{sigmak41} to substitute them into \eqref{s2k}, providing $Q\gg 1$, we get
\begin{equation}
\begin{split}
\label{ODE}
\partial_{t}\Re&\leq \frac{nf^{'}(K)K}{u-\zeta_{0}-\varepsilon_{0}}\Re-\frac{C\varepsilon_{0}K f^{'}(K)}{u-\zeta_{0}-\varepsilon_{0}}K^{\frac{1}{n}}\Re+\Re^{2}+\frac {|u_{t}||\frac{d \zeta_{0}}{dt}|}{(u-\zeta_{0}-\varepsilon_{0})^{2}}\\
&\leq -C_{2}\Re^{2+\frac{\gamma}{n}}+C_{3}\Re^{2}<0
\end{split}
\end{equation}
 for some positive $C_{2},C_{3}$, independent of $t$. Therefore, \eqref{ODE} implies that
\[
\Re(x_{0},t)\leq C
\]
for $C>0$, independent of $t$. This implies that
\[
K\leq \hat{C}
\]
for $\hat{C}>0$, independent of $t$. The proof is completed.
\end{proof}

On the other hand, by means of Corollary \ref{KPR}, and the choice of $f$, we know that $-\partial h/\partial t \geq \inf_{t=0}f(K)>0$, hence $h(x,t)\leq \max_{t=0}h(x,t)$. Observe that the solution of \eqref{ca} will shrink to a point if $\vartheta$ is small enough, then we put
\[
\vartheta_{*}=\max \{\vartheta>0: F(\cdot, t) {\rm \ shrinks  \ to \ a \ point \ in \ finite \ time } \}.
\]
Combining Lemmas \ref{cw}, \ref{PL*}, \ref{prin}, we conclude that, for any $\vartheta\in (\vartheta_{*},+\infty)$, the inner radii of $F(\cdot,t)$ have a uniform positive lower bound and the outer radii are uniformly bounded from above,  so \eqref{ca} is uniformly parabolic on any finite time and there exists a positive constant $C$, independent of $t$, such that
\begin{equation}\label{2a}
||u||_{C^{2}(\sn \times [0,T))}\leq C.
\end{equation}
The $C^{2,\alpha}$ estimate of $u$ follows from the standard parabolic theory showed in the result of \cite{K87}, and the $C^{\infty}$ estimate of $u$ follows from the standard arguments. Hence, flow \eqref{ca} has a long time solution and we reveal that a subsequence of $M_{t}$ converges in $C^{\infty}$ to a hypersurface $M_{\infty}$.

In the last, we show that the estimate in Theorem \ref{conv} implies that $M_{\infty}$ is a round sphere.  Precisely,
\begin{equation}\label{FR}
0\leq \frac{1}{n^{n}}-\frac{ K}{ H^{n}}=\frac{g_{\sigma}}{H^{\sigma}}\leq \frac{\max g_{\sigma}(x,0)}{H^{\sigma}}\rightarrow 0 \ as \ t\rightarrow \infty
\end{equation}
for some points where the mean curvature is large in a neighborhood $U$ of $M_{\infty}$. By scale the invariance of $g$, $g=0$ in $U$. Clearly, at all points of $M_{\infty}$, we have $g=0$, i.e.,
\[
\frac{1}{n^{n}}=\frac{K}{H^{n}},
\]
which tells that $M_{\infty}$ is  a round sphere. So, we complete the proof of Theorem \ref{th*}.

{\bf Example: } we are of great interest for the following function:
\begin{equation}\label{sf}
f(K)=K^\frac{1}{n}\ln \hat{K},
\end{equation}
where $\hat{K}=K+ K_{0}$ with $K_{0}\geq e^{n/(n-1)}$ a constant chosen.  Clearly,
\begin{equation*}\label{c1}
f^{'}(K)=\frac{1}{n}K^{\frac{1}{n}-1}\ln \hat{K}+K^{\frac{1}{n}}\frac{1}{\hat{K}},
\end{equation*}

\begin{equation*}\label{c2}
f^{''}(K)=\frac{1}{n}(\frac{1}{n}-1)K^{\frac{1}{n}-2}\ln \hat{K}+\frac{2}{n}K^{\frac{1}{n}-1}\frac{1}{\hat{K}}-K^{\frac{1}{n}}\frac{1}{(\hat{K})^{2}},
\end{equation*}
\begin{equation*}
\begin{split}
\label{c3}
Kf^{'}(K)
&= K^{\frac{1}{n}} \left(\frac{1}{n} \ln \hat{K} + \frac{K}{\hat{K}}\right)\\
&\leq K^{\frac{1}{n}}\left(\frac{1}{n} \ln \hat{K} + 1\right)\\
&  \leq K^{\frac{1}{n}} \ln \hat{K}=f(K),
\end{split}
\end{equation*}
\begin{equation*}
\begin{split}
\label{c4}
\frac{1}{n}-1<\frac{Kf^{''}(K)}{f^{'}(K)}=\frac{\frac{1}{n}(\frac{1}{n}-1)K^{\frac{1}{n}-1}\ln \hat{K}+\frac{2}{n}K^{\frac{1}{n}}\frac{1}{\hat{K}}-K^{\frac{1}{n}+1}\frac{1}{(\hat{K})^{2}}}{\frac{1}{n}K^{\frac{1}{n}-1}\ln \hat{K}+K^{\frac{1}{n}}\frac{1}{\hat{K}}}\leq \frac{2}{n},
\end{split}
\end{equation*}
and
\begin{equation*}
\begin{split}
\label{c5}
n K f^{'}(K)&= K^{\frac{1}{n}}\ln \hat{K} + K^{\frac{1}{n}}\frac{n K}{\hat{K}}\\
&= f(K)\left( 1+\frac{ nK}{\hat{K}\ln \hat{K}}\right)> f(K).
\end{split}
\end{equation*}
Observe that \eqref{sf} satisfies Condition 1.1. Here $\beta$ in (ii) can be chosen as $(n+1)/n$, $\gamma$ and $\hat{\gamma}$ in (iv) can be respectively chosen as $n/(n+1)$ and $1/(2^{\frac{n}{n+1}})$. Combining the above, we see that the $C^{\infty}$ estimates of $u$ holds. In conjunction with Theorem \ref{AH}, we conclude that Theorem \ref{th*} holds. Note that \eqref{sf} is a non-homogeneous function of the principal curvature, unlike many previously studied $\alpha$-Gauss curvature flows.

\end{document}